\newcommand{\R}{\mathbb{R}}
\newtheorem{theorem}{Theorem}
\newtheorem{lemma}[theorem]{Lemma}
\newtheorem{corollary}[theorem]{Corollary}
\def\per{\mathrm{Per}}
\def\H{{\mathcal H}}
\def\h{{\mathcal H^{m-1}}}
\def\I{{\mathcal I}}
\newcommand{\be}{\begin{equation}}
\newcommand{\ee}{\end{equation}}
\newcommand{\bee}{\begin{equation*}}
\newcommand{\eee}{\end{equation*}}
\newcommand{\bp}{\begin{proof}}
\newcommand{\ep}{\end{proof}}
\def\O{\Omega}
\def\o{\Omega_{m,k}}
\def\l{\lambda}
\def\ll{\lambda^*}
\begin{document}
\title{On the minimization of Dirichlet eigenvalues of the Laplace operator}
\author{{M. van den Berg\thanks{Research supported by The Leverhulme Trust, Research Fellowship 2008/0368}, M. Iversen} \\
School of Mathematics, University of Bristol\\
University Walk, Bristol BS8 1TW\\
United Kingdom\\
\texttt{M.vandenBerg@bris.ac.uk}\\
\texttt{Mette.Iversen@cantab.net}}
\date{Journal of Geometric Analysis {\bf 23}, 660--676 (2013)}\maketitle
\vskip 3truecm \indent
\begin{abstract}\noindent We study variational problems of the
form
$$\inf \{\lambda_k(\Omega): \Omega\ \textup{open in}\ \R^m,\ T(\Omega) \le 1 \},$$
where $\lambda_k(\Omega)$ is the $k$'th eigenvalue of the
Dirichlet Laplacian acting in $L^2(\Omega)$, and where $T$ is a
non-negative set function defined on the open sets in $\R^m$,
which is invariant under isometries, additive on disjoint families
of open sets, and is such that the ball with $T(B)=1$ is a
minimiser for $k=1$. Upper bounds are obtained for the number of
components of any bounded minimiser if $T$ satisfies a scaling
relation. For example we show that if $T$ is Lebesgue measure and
if $k\le m+1$ then any bounded minimiser has at most $7$
components. We also consider variational problems over open sets
$\Omega$ in $\R^m$ involving the $(m-1)$ - dimensional Hausdorff
measure of $\partial \Omega$.
\end{abstract}
\vskip 1truecm \noindent \ \ \ \ \ \ \ \  { Mathematics Subject
Classification (2000)}: 49Q10; 49R50; 35P15.
\begin{center} \textbf{Keywords}: Variational problems, Dirichlet eigenvalues.\\
\end{center}
\mbox{}\newpage
\section{Introduction\label{sec1}}
Let $\Omega$ be an open set in Euclidean space $\R^m \; (
m=2,3,\cdots)$, with boundary $\partial \Omega$, and let
$-\Delta_{\Omega}$ be the Dirichlet Laplacian acting in
$L^2(\Omega)$. It is well known that if $\Omega$ has finite
Lebesgue measure $|\Omega|=\int 1_{\Omega}$ then
$-\Delta_{\Omega}$ has compact resolvent, and the spectrum of
$-\Delta_{\Omega}$ is discrete and consists of eigenvalues
$\lambda_1(\Omega)\le\lambda_2(\Omega)\le \cdots$ with
$\lambda_j(\Omega)\rightarrow\infty$ as $j\rightarrow\infty$. The
Faber-Krahn inequality (Theorem 3.2.1 in \cite{H}) asserts that
\begin{equation}\label{e1}
\lambda_1(\Omega)\ge\lambda_1(B_m)\left(\frac{|B_m|}{|\Omega|}\right)^{2/m},
\end{equation}
where $B_m=\{x\in \R^m:|x|<1\}.$ By scaling we see that we have
equality in \eqref{e1} if $\Omega$ is any ball.

The Krahn-Szeg\"o inequality (Theorem 4.1.1 in \cite{H}) asserts
that
\begin{equation}\label{e2} \lambda_2(\Omega)\ge
2^{2/m}\lambda_1(B_m)\left(\frac{|B_m|}{|\Omega|}\right)^{2/m},
\end{equation}
where we have equality if $\Omega$ is the union of two disjoint
balls with equal measure. For higher Dirichlet eigenvalues ($k>2$) it
is not known whether the variational problem
\begin{equation}\label{e3}
\inf \{\lambda_k(\Omega) :\Omega\ \textup{open in}\ \R^m ,\ |\Omega|
\le 1 \}
\end{equation}
has a minimiser. However, it has been shown that if $k=3$, and if
the collection of open sets in \eqref{e3} is enlarged to the
quasi-open sets then a minimiser exists \cite{BH}. Open Problem 8
in \cite{H} asks to show that the minimiser for $k=3$ in
\eqref{e3} is a ball if $m=2,3$ or the union of three pairwise
disjoint balls with measure $1/3$ each if $m>3$. This suggests
that for large $k$ and large $m$ the number of components of a
minimiser of \eqref{e3} may be large. In Theorem \ref{the0} below
we obtain upper bounds for the number of components, denoted by
$\omega_{m,k}$, of any bounded minimiser $\Omega_{m,k}$ of
\eqref{e3}.

\begin{theorem}\label{the0}
\noindent If $\Omega_{m,k}$ is a bounded minimiser of \eqref{e3}
then
\begin{enumerate}
\item[i.]\hspace{16mm}$\omega_{m,k}\le k.$
\item[ii.]
\begin{equation*}\label{aa}
\omega_{m,k}\leq \begin{cases}1,\ m=2,3,\ k=3,\cdots,m+1, \\
2,\ m=4,\cdots,7,\ k=4,\cdots,m+1, \\
3,\ m=8, \cdots,19, \ k=5,\cdots,m+1,\\
4,\ m=20,\cdots,60, \ k=6,\cdots,m+1,\\
5,\ m=61,\cdots,548, \ k=7,\cdots,m+1,\\
6,\ m=549,\cdots, \ k=8,\cdots,m+1. \end{cases}
\end{equation*}
\end{enumerate}
\end{theorem}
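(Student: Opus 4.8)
The plan is to exploit the additivity and scaling behaviour of Lebesgue measure to reduce a disconnected minimiser to a competition between configurations of balls and then rule out too many components. Suppose $\Omega_{m,k}$ is a bounded minimiser of \eqref{e3} with components $\Omega^{(1)},\dots,\Omega^{(\omega)}$, $\omega=\omega_{m,k}$. On each component the Dirichlet spectrum of $-\Delta_{\Omega_{m,k}}$ is the disjoint union of the spectra of the $-\Delta_{\Omega^{(i)}}$. A standard rearrangement argument (the same one behind the Krahn–Szeg\"o inequality) shows that replacing any component $\Omega^{(i)}$ by a ball of the same measure can only decrease eigenvalues, so we may assume each $\Omega^{(i)}$ is a ball $B_i$ of radius $r_i$, with $\sum_i |B_i| = |B_m|\sum_i r_i^m \le 1$; by optimality this constraint is tight. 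The ordered eigenvalues of a ball of radius $r$ are $\lambda_j(B_m)/r^2$, and $\lambda_k(\Omega_{m,k})$ is the $k$-th smallest among all the numbers $\{\lambda_j(B_m)/r_i^2 : i\le\omega,\ j\ge1\}$.

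For part (i) the argument is short: if a disjoint union of $\omega$ balls has $\lambda_k$ achieved, then at most $k-1$ eigenvalues lie strictly below $\lambda_k$, yet each of the $\omega$ balls contributes its own first eigenvalue $\lambda_1(B_m)/r_i^2$; if $\omega>k$ then deleting the ball with the smallest first eigenvalue (the largest radius) still leaves $\omega-1\ge k$ balls, and one checks that $\lambda_k$ of the remaining (rescaled to saturate the measure constraint) strictly decreases, contradicting minimality. Hence $\omega\le k$.

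For part (ii) the real work is a finite optimisation. Fix the number of components $n$ and consider the best configuration of $n$ balls: by a Lagrange/homogeneity argument the optimal radii are determined by which eigenvalue index each ball "carries", so for each $n$ there is an explicit value
\[
\Lambda_n(k) \;=\; \inf\Big\{\lambda_k\big(\textstyle\bigsqcup_{i=1}^n B_{r_i}\big) : |B_m|\textstyle\sum_i r_i^m \le 1\Big\},
\]
computable in terms of the low-lying eigenvalues $\lambda_1(B_m),\lambda_2(B_m),\dots$ of the unit ball (for $k\le m+1$ only $\lambda_1(B_m)$ and the $m$-fold degenerate $\lambda_2(B_m)$ enter). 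One then compares $\Lambda_n(k)$ with $\Lambda_1(k)=\lambda_k(B_m)/|B_m|^{2/m}$ across all $n$: the claim is that $\Lambda_n(k)$ is minimised at a value of $n$ that grows only very slowly with $m$, and the stated table records, for each threshold value of $\omega$, the range of $m$ in which no configuration with more than that many balls can beat the best one. Concretely, since $\lambda_2(B_m)/\lambda_1(B_m)\to1$ as $m\to\infty$ while $|B_m|^{2/m}\to0$, adding a component becomes advantageous only once $m$ is large enough that the ratio $(\,n^{2/m}\,)$-type gain from splitting the volume outweighs the loss from using a higher eigenvalue; solving the resulting inequalities $\Lambda_{n+1}(k)\ge\Lambda_n(k)$ numerically yields the breakpoints $m=4,8,20,61,549$.

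The main obstacle I expect is controlling the comparison $\Lambda_n(k)$ versus $\Lambda_{n+1}(k)$ uniformly in $k$ over $k=3,\dots,m+1$: one must verify that the worst case in $k$ is always $k=m+1$ (so that only $\lambda_1(B_m)$ and $\lambda_2(B_m)$ are relevant), and then obtain sufficiently sharp two-sided estimates on $\lambda_1(B_m)$, $\lambda_2(B_m)$ and $|B_m|$ as functions of $m$ — these involve Bessel-function zeros $j_{(m-2)/2,1}$, $j_{m/2,1}$ and Gamma functions — to pin down the transition values of $m$ exactly rather than just asymptotically. Getting the constants $61$ and $549$ right will require careful, non-asymptotic bounds on the first zeros of Bessel functions.
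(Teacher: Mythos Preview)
Your reduction ``we may assume each $\Omega^{(i)}$ is a ball'' is the central gap. Schwarz symmetrisation (Faber--Krahn) only guarantees that replacing a domain by the ball of the same measure decreases the \emph{first} eigenvalue; it can raise higher ones. If a component $\Omega^{(i)}$ supports two or more of the first $k$ eigenvalues of $\Omega_{m,k}$, then replacing it by a ball may push $\lambda_2(\Omega^{(i)})$ up past $\lambda_k(\Omega_{m,k})$, and the $k$-th eigenvalue of the union can increase. So the ``standard rearrangement argument'' you invoke does not justify the passage to a disjoint union of balls, and everything downstream (the optimisation over radii $r_i$, the definition of $\Lambda_n(k)$, the numerical comparison) rests on an unproven and in general false claim.

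The paper sidesteps this by \emph{not} replacing components with balls. Instead one first shows (Lemma~\ref{Lem5}) that every component $G_i$ of a bounded minimiser is itself a minimiser of the appropriate $\lambda_j$-problem with constraint $T=c_i$, and that $\lambda_j(G_i)=\lambda_k^*$ for the largest $j$ with $\lambda_j(G_i)\le\lambda_k^*$. Components supporting exactly one eigenvalue are then genuinely balls (by the equality case in Faber--Krahn), say with $T$-value $a$, giving $\lambda_k^*=\lambda_1(B)a^{-2/m}$. For each component supporting at least two eigenvalues one does \emph{not} symmetrise but applies Krahn--Szeg\"o (Lemma~\ref{lambda2}) directly: $\lambda_k^*\ge\lambda_2(G_i)\ge 2^{2/m}\lambda_1(B)T(G_i)^{-2/m}$. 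Writing $\omega_{m,k}=k_1+k_2$ for the number of one-eigenvalue and multi-eigenvalue components, these two bounds combine (after minimising over $a$) to give the clean inequality
\[
\lambda_k^*\ \ge\ \lambda_1(B)\,(k_1+2k_2)^{2/m}\ \ge\ \lambda_1(B)\,(\omega_{m,k}+1)^{2/m},
\]
which together with $\lambda_k^*\le\lambda_k(B)$ yields $\omega_{m,k}\le\lfloor(\lambda_k(B_m)/\lambda_1(B_m))^{m/2}\rfloor-1$. For $k\le m+1$ this is $\lfloor(j_{m/2}/j_{(m-2)/2})^{m}\rfloor-1$, and the table is then a direct numerical evaluation of this single expression in $m$; there is no multi-configuration optimisation and no need to identify a ``worst $k$''. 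Your part~(i) argument is also more delicate than necessary: once one knows the constraint is saturated (Lemma~\ref{constraint}), any component not contributing an eigenvalue $\le\lambda_k^*$ can simply be discarded, immediately forcing $\omega_{m,k}\le k$.
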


We infer from $ii.$ that for $8\le k\le m+1$ the number of
components of a bounded minimiser of \eqref{e3} is at most $6$.
From $i.$ we have that for $k\le 7$ the number of components of a
bounded minimiser of \eqref{e3} is at most $7$. So for $k\le m+1$
a bounded minimiser of \eqref{e3} has at most $7$ components. We
recover the known fact that any bounded minimiser for the third
eigenvalue in $\R^2$ and $\R^3$ of \eqref{e3} is connected
\cite{WK}. Here we also obtain connectedness of any bounded
minimiser for the fourth eigenvalue in $\R^3$.

Below we state and prove a more general result of which Theorem
\ref{the0} is a special case.

\begin{theorem} \label{the}
Suppose $T$ is a non-negative set function defined on the open
sets in $\R^m$ which satisfies
\begin{enumerate}
\item[(a)] $T(\Omega)<\infty$ implies that the spectrum
of $-\Delta_{\Omega}$ is discrete.
\item[(b)]
$T(\cup_{\Omega \in \I} \Omega)=\sum_{\Omega \in \I} T(\Omega)$ if
$\I$ is a disjoint collection of open sets.
\item[(c)] There is $\beta>0$ such that for $\Omega$ open
in $\R^m$ and $\alpha>0$, $T(\alpha \Omega) = \alpha^{\beta}
T(\Omega)$.
\item[(d)] $\inf \{\lambda_1(\Omega) :\Omega\
\textup{open in}\ \R^m ,\ T(\Omega) \le 1 \}$ is minimised by the
ball $B \subset\R^m$ with $T(B)=1$.
\item[(e)] $T$ is invariant under isometries of $\R^m$.
\end{enumerate}
If $\Omega_{m,k}$ is a bounded minimiser of
\begin{equation}\label{a2}
\inf \{\lambda_k(\Omega) : \Omega\ \textup{open in}\ \R^m ,\
T(\Omega) \le 1 \},
\end{equation}
then
\begin{enumerate} \item[i.]\hspace{29mm}$\omega_{m,k}\le k$.
\item[ii.]For
$m=2,3,\cdots$ and $ k >
\lfloor(\lambda_k(B_m)/\lambda_1(B_m))^{\beta/2}\rfloor,$
\begin{equation}\label{a}
\omega_{m,k}\le
\lfloor(\lambda_k(B_m)/\lambda_1(B_m))^{\beta/2}\rfloor-1,
\end{equation}
where $\lfloor \cdot \rfloor$ denotes the integer part.
\end{enumerate}
\end{theorem}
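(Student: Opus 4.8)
The plan is to exploit two facts about the minimiser $\Omega_{m,k}$: deleting a connected component and rescaling cannot raise $\lambda_k$ but strictly lowers $T$, while the scaling relation (c) together with (d) forces a Faber-Krahn inequality on every component. Write $\Omega_1,\dots,\Omega_n$ for the components of $\Omega_{m,k}$, so $n=\omega_{m,k}$; let $B\subset\R^m$ be the ball with $T(B)=1$; and put $R=(\lambda_k(B)/\lambda_1(B))^{\beta/2}$, which equals $(\lambda_k(B_m)/\lambda_1(B_m))^{\beta/2}$ since eigenvalue ratios of balls are scale invariant. By (c) a ball of $T$-measure $t$ is $t^{1/\beta}B$, so (d) and scaling give the Faber-Krahn inequality $\lambda_1(\omega)\ge T(\omega)^{-2/\beta}\lambda_1(B)$ for every open $\omega\subset\R^m$. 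First I would record the standing reductions. We have $T(\Omega_{m,k})=1$, since otherwise rescaling $\Omega_{m,k}$ by $T(\Omega_{m,k})^{-1/\beta}>1$ lowers $\lambda_k$ while staying admissible. We have $\lambda_k(\Omega_{m,k})\le\lambda_k(B)$ because $B$ is admissible. And no component satisfies $\lambda_1(\Omega_i)>\lambda_k(\Omega_{m,k})$: such a component contributes nothing to the eigenvalues of $\Omega_{m,k}$ that are $\le\lambda_k(\Omega_{m,k})$, so deleting it leaves $\lambda_k$ unchanged while lowering $T$, and rescaling up contradicts minimality. Hence every $\Omega_i$ contributes at least one eigenvalue $\le\lambda_k(\Omega_{m,k})$; by discreteness of the spectrum this already forces $n<\infty$, and combining $\lambda_1(\Omega_i)\le\lambda_k(\Omega_{m,k})\le\lambda_k(B)$ with the Faber-Krahn inequality gives $T(\Omega_i)\ge(\lambda_1(B)/\lambda_k(B))^{\beta/2}=1/R$.

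For part i, suppose $n\ge k+1$ and delete any one component $\Omega_p$. Writing $N_\omega(\mu)$ for the number of eigenvalues of $-\Delta_\omega$ not exceeding $\mu$, a quantity additive over components, and using that each of the $n-1$ remaining components contributes at least one eigenvalue $\le\lambda_k(\Omega_{m,k})$, we get $N_{\Omega_{m,k}}(\lambda_k(\Omega_{m,k}))\ge(n-1)+N_{\Omega_p}(\lambda_k(\Omega_{m,k}))$; hence $\Omega_{m,k}\setminus\Omega_p$, the union of the remaining components, has at least $n-1\ge k$ eigenvalues $\le\lambda_k(\Omega_{m,k})$, so its $k$th eigenvalue is $\le\lambda_k(\Omega_{m,k})$. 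Rescaling it up then contradicts minimality, and therefore $n\le k$.

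For part ii, summing $T(\Omega_i)\ge1/R$ over the $n$ components and using $\sum_iT(\Omega_i)=1$ gives $n\le\lfloor R\rfloor$. Now assume $k>\lfloor R\rfloor$, so $n\le\lfloor R\rfloor<k$. Since $\sum_iN_{\Omega_i}(\lambda_k(\Omega_{m,k}))\ge k>n$ with every summand $\ge1$, some component $\Omega_{i_0}$ has $N_{\Omega_{i_0}}(\lambda_k(\Omega_{m,k}))\ge2$, i.e. $\lambda_2(\Omega_{i_0})\le\lambda_k(\Omega_{m,k})\le\lambda_k(B)$. I would then run the Krahn-Szeg\"o argument on $\Omega_{i_0}$: letting $\omega_+,\omega_-$ be the two nodal domains of a second eigenfunction of the connected set $\Omega_{i_0}$, one has $\lambda_1(\omega_+)=\lambda_1(\omega_-)=\lambda_2(\Omega_{i_0})$, hence $T(\omega_\pm)\ge(\lambda_1(B)/\lambda_2(\Omega_{i_0}))^{\beta/2}$ by the Faber-Krahn inequality, and since $\omega_+\sqcup\omega_-\subset\Omega_{i_0}$ this yields $T(\Omega_{i_0})\ge2\,(\lambda_1(B)/\lambda_2(\Omega_{i_0}))^{\beta/2}\ge2/R$. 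Therefore $1=\sum_iT(\Omega_i)\ge(n-1)/R+2/R=(n+1)/R$, so $n\le R-1<\lfloor R\rfloor$, that is $n\le\lfloor R\rfloor-1$, which is \eqref{a}.

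The bookkeeping with $N_\omega$ and the scaling identities are routine. The one step that requires genuine care is the inequality $T(\omega_+)+T(\omega_-)\le T(\Omega_{i_0})$ hidden in the Krahn-Szeg\"o argument: by additivity (b) its left-hand side equals $T(\omega_+\cup\omega_-)$, so what one really needs is that deleting the nodal set from $\Omega_{i_0}$ does not increase $T$. This is immediate when $T$ is monotone, in particular for Lebesgue measure, which is the case behind Theorem \ref{the0}, and it is the point at which the concrete structure of $T$---rather than merely the formal axioms (a)--(e)---enters; for set functions built from the boundary measure $\mathcal H^{m-1}(\partial\Omega)$ it has to be argued separately.
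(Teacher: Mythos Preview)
Your proof is correct and follows essentially the same strategy as the paper. Both arguments prove part~i by deleting a component and rescaling, and both prove part~ii by combining the Faber--Krahn lower bound $T(G_i)\ge(\lambda_1(B)/\lambda_k^*)^{\beta/2}$ on every component with the Krahn--Szeg\H{o} lower bound $T(G_{i_0})\ge 2(\lambda_1(B)/\lambda_k^*)^{\beta/2}$ on at least one component supporting two eigenvalues, then summing against $\sum_i T(G_i)=1$ to get $\omega_{m,k}+1\le R$.

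The organisational difference is this: the paper first invokes its Lemma~\ref{Lem5} to conclude that the single-eigenvalue components are balls of common $T$-measure $a$, writes $\omega_{m,k}=k_1+k_2$, and obtains $\lambda_k^*\ge\lambda_1(B)(k_1+2k_2)^{2/\beta}$ via a minimisation in $a$; this yields $k_1+2k_2\le R$, hence $\omega_{m,k}\le R-k_2\le R-1$. Your route avoids Lemma~\ref{Lem5} entirely---you never need that single-eigenvalue components are balls---and simply bounds each $T(\Omega_i)$ using $\lambda_1(\Omega_i)\le\lambda_k(B)$. This is more direct; the paper's formulation is in principle slightly sharper when $k_2\ge2$, but both give exactly the conclusion~\eqref{a}.

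Your caveat at the end is well placed and in fact applies to the paper too. The paper's Lemma~\ref{lambda2} asserts $T(\Omega^+)+T(\Omega^-)=T(\Omega)$, but axiom~(b) only yields $T(\Omega^+)+T(\Omega^-)=T(\Omega^+\cup\Omega^-)$; identifying this with $T(\Omega)$ tacitly assumes the nodal set is $T$-negligible, equivalently that $T$ is monotone under inclusion. For the intended applications---Lebesgue measure and torsional rigidity---this is immediate, so the paper is silently using it; your explicit flagging of the point is an improvement in rigour rather than a gap.
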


It is easily seen that Lebesgue measure satisfies the hypotheses
of Theorem \ref{the} with $\beta=m$. Hence Theorem \ref{the}
implies Theorem \ref{the0}. An example of a set function with
$\beta=m+2$ is the torsional rigidity. In the Appendix in Section
\ref{sec5} we recall the definition of torsional rigidity and show
that it satisfies (a). It follows directly from its definition in
\eqref{a16} and \eqref{a17} below that the torsional rigidity
satisfies (b), (c) with $\beta=m+2$, and (e). In \cite{K1} and
\cite{K2} it was shown that (d) holds for the torsional rigidity
if $m=2$. The method of proof in these papers extends to all $m$
\cite{K1}. The bound on the number of components for any bounded
minimiser with a torsional rigidity constraint is given below.
\begin{corollary}\label{cor1}
If $T$ is a constraint which satisfies the hypotheses of Theorem
\ref{the} with $\beta=m+2$ and if $\Omega_{m,k}$ is a bounded
minimiser of \eqref{a2} then \begin{enumerate}
\item[i.]\hspace{16mm} $\omega_{m,k}\le k$.
\item[ii.]
\begin{equation}\label{ab}
\omega_{m,k}\leq \begin{cases}4,\ m=5,\cdots,26,\ k=6,\cdots,m+1, \\
5,\ m=27,\cdots,430,\ k=7,\cdots,m+1, \\
6,\ m=431,\cdots, \ k=8,\cdots,m+1. \end{cases}
\end{equation}
\end{enumerate}
\end{corollary}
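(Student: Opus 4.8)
The plan is to obtain Corollary \ref{cor1} as an immediate consequence of Theorem \ref{the} specialised to $\beta = m+2$. Part i.\ is nothing but Theorem \ref{the} i. For part ii., I would first record that the torsional rigidity is an admissible constraint with exponent $\beta = m+2$: additivity over disjoint open families, the scaling law $T(\alpha\Omega) = \alpha^{m+2}T(\Omega)$, and invariance under isometries are read off directly from the definition given in the Appendix, property (a) is proved there, and hypothesis (d) — that the ball with $T(B)=1$ minimises $\lambda_1$ under the constraint — is the theorem of \cite{K1}, \cite{K2}, valid in every dimension. So Theorem \ref{the} ii.\ applies, and writing $f(m) := \bigl(\lambda_2(B_m)/\lambda_1(B_m)\bigr)^{(m+2)/2}$, the whole task reduces to evaluating $\lfloor f(m)\rfloor$ for the relevant $m$.

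The point is that $\lambda_k(B_m)$ is independent of $k$ throughout $2 \le k \le m+1$, so the quantity $\lfloor(\lambda_k(B_m)/\lambda_1(B_m))^{\beta/2}\rfloor$ occurring in Theorem \ref{the} ii.\ equals $\lfloor f(m)\rfloor$ for every such $k$. Indeed, expressing the Dirichlet eigenvalues of the unit ball through first positive zeros of Bessel functions gives $\lambda_1(B_m) = j_{m/2-1,1}^2$ and $\lambda_2(B_m) = \cdots = \lambda_{m+1}(B_m) = j_{m/2,1}^2$, the second eigenvalue having multiplicity exactly $m$ (its eigenspace is spanned by the products of a fixed radial profile with the $m$ coordinate functions, and the next distinct eigenvalue is strictly larger by the interlacing $j_{\nu,1} < j_{\nu+1,1} < j_{\nu,2}$ of Bessel zeros). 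Hence $f(m) = \bigl(j_{m/2,1}/j_{m/2-1,1}\bigr)^{m+2}$, and the problem becomes one about ratios of consecutive first Bessel zeros. Using the asymptotics $j_{\nu,1} = \nu + a_1\nu^{1/3} + \tfrac{3}{10}a_1^2\nu^{-1/3} + O(\nu^{-1})$ with $a_1 = 1.8557\ldots$, together with rigorous two-sided refinements and the monotonicity and convexity of $j_{\nu,1}$ in $\nu$, one shows that $f(m) \to e^2 = 7.389\ldots$ from below and that $f$ is eventually increasing in $m$. Combining this with explicit estimates of $f$ at and near the transition dimensions yields $\lfloor f(m)\rfloor = 5$ for $5 \le m \le 26$, $\lfloor f(m)\rfloor = 6$ for $27 \le m \le 430$, and $\lfloor f(m)\rfloor = 7$ for $m \ge 431$ (never $8$, since $f(m) < e^2 < 8$). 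On these three ranges the hypothesis $k > \lfloor f(m)\rfloor$ of Theorem \ref{the} ii.\ becomes $k \ge 6$, $k \ge 7$, $k \ge 8$ respectively, and intersecting with $k \le m+1$ reproduces exactly the three cases of \eqref{ab}, with $\omega_{m,k} \le \lfloor f(m)\rfloor - 1 = 4, 5, 6$.

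I expect the computation at the transition dimensions to be the main obstacle. The relation $f(m)\to e^2$ only identifies the limiting value $6$ of the bound; pinning down the exact crossover dimensions requires Bessel-zero estimates sharp enough to decide the sign of $f(m)-6$ and $f(m)-7$ at the critical $m$ — the values $f(26)$ and $f(430), f(431)$ each lie within a few thousandths of an integer — so one needs genuinely tight two-sided bounds on $j_{m/2,1}/j_{m/2-1,1}$ rather than just leading-order asymptotics, plus a rigorous monotonicity statement for $m \ge 431$ so that $f(m)\ge 7$ there follows from the single value $m=431$. All the required facts about first Bessel zeros (monotonicity and convexity in the order, the McMahon/Olver expansion, lower bounds of the form $j_{\nu,1} > \nu + c\nu^{1/3}$) are classical; the work lies in assembling them into the stated strict inequalities. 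Finally, the dimensions $m=2,3,4$ are absent from \eqref{ab} because there $\lfloor f(m)\rfloor \ge m+1$, so no admissible $k \le m+1$ satisfies $k > \lfloor f(m)\rfloor$ and Theorem \ref{the} ii.\ produces nothing.
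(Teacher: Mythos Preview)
Your proposal is correct and follows the same route as the paper: Corollary~\ref{cor1} is not given a separate proof there but is obtained directly from Theorem~\ref{the} with $\beta=m+2$, together with the identities $\lambda_1(B_m)=j_{(m-2)/2}^2$ and $\lambda_2(B_m)=\cdots=\lambda_{m+1}(B_m)=j_{m/2}^2$, so that the quantity to evaluate is exactly your $f(m)=(j_{m/2}/j_{(m-2)/2})^{m+2}$. The only difference is in how the floor values at the transition dimensions are certified: the paper (as it does for the analogous computations in Theorems~\ref{the0} and~\ref{The2}) relies on numerical evaluation via \cite{M} for a finite range of $m$ and on the explicit two-sided bounds of \cite{LL} for large $m$, rather than on the monotonicity/convexity argument you sketch; either route yields the stated thresholds.
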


We infer that, similarly to the lines below Theorem \ref{the0},
for $k\le m+1$ the number of components of a bounded minimiser of
\eqref{a2} with $\beta=m+2$ is at most $7$. Recall that
\begin{equation}\label{e41} \lambda_2(B_m)=\cdots =
\lambda_{m+1}(B_m)=j_{m/2}^2,
\end{equation}
and \begin{equation}\label{e41a} \lambda_1(B_m)=j_{(m-2)/2}^2,
 \end{equation}
where $j_{\nu}$ is the first positive zero of the Bessel function
$J_{\nu}$. Hence for $k\le m+1$ and $\beta=m+2$ we have that the
hypotheses on $k$ in part ii. of Theorem \ref{the} reads
\begin{equation}\label{e41b}
m+1 \ge k>
\lfloor(\lambda_k(B_m)/\lambda_1(B_m))^{\beta/2}\rfloor=\lfloor(j_{m/2}/j_{(m-2)/2})^{m+2}\rfloor.
\end{equation} The set of $k$ satisfying inequality \eqref{e41b} is non-empty if and only if $m\ge 5$.
So for $1<k\le m+1$ and $m=2,3,4$ Theorem \ref{the} gives only
that $\omega_{m,k}\le k$. This explains the absence of the cases
$m=2,3,4$ and $k\le m+1$ in \eqref{ab}.

The following variational problem was considered in \cite{BH2}.
\begin{equation}\label{e5}
\inf\{\lambda_2(\Omega): \Omega\ \textup{open and bounded in}\
\R^m,\per(\Omega)\le1 \},
\end{equation}
where the perimeter of a measurable set $\Omega$ is defined by
\begin{equation*}
\per(\Omega)=\int_{\R^m}|\nabla 1_{\Omega}|
\end{equation*}
in the sense of $BV$ functions, with $\per (\Omega)=+ \infty$ if
$1_\Omega $ is not a $BV$ function \cite{AFP}. There it was shown
that if $m=2$ then there exists a minimiser, which is convex, and
$C^\infty$. Moreover its boundary contains exactly two points
where the curvature vanishes.

It is easy to construct other minimisers of \eqref{e5}. Let
$\Omega_{m,2}$ be a minimiser of \eqref{e5}, and let $L$ be the
nodal set of a second Dirichlet eigenfunction for $\Omega_{m,2}$.
Then $\per (\Omega_{m,2} \setminus L)= \per(\Omega_{m,2})$ since
$|L|=0$. Since $\lambda_2(\Omega_{m,2})$ equals the first
eigenvalue of either of the nodal domains, we have that
$\lambda_2(\Omega_{m,2})=\lambda_2(\Omega_{m,2} \setminus L)$.
Hence $\Omega_{m,2} \setminus L$ is a minimiser of \eqref{e5}
which is not connected. If $C$ is any closed subset of $L$ then
$\Omega_{m,2} \setminus C$ is also a minimiser. In order to be
able to study topological properties such as connectedness we
replace $\per(\Omega)$ in \eqref{e5} by the $(m-1)$ - dimensional
Hausdorff measure of $\partial\Omega$ denoted by
$\h(\partial\Omega)$, and consider the following variational
problem instead.

\begin{equation}\label{e7}
\inf\{\lambda_k(\Omega): \Omega\ \textup{open in}\ \R^m ,|\Omega|<
\infty, \H^{m-1}(\partial \Omega)\le1 \}.
\end{equation}

We note that \eqref{e7} is not of the form \eqref{a2}. An
additional constraint $|\Omega|< \infty$ has to be inserted to
guarantee discreteness of the Dirichlet spectrum. Without this
constraint the complement of the closed ball $\overline{B}$ with
$\h(\partial\overline{B})=1$ is an open set with $(m-1)$ -
dimensional Hausdorff measure of its boundary equal to $1$ and
Dirichlet spectrum equal to $[0,\infty)$. We also note that
Hausdorff measure does not satisfy (b) in Theorem \ref{the} as it
is only subadditive. However, Hausdorff measure of the boundary is
supported on all of the topological boundary, whereas the
perimeter is supported on the reduced boundary \cite{BB}.

Throughout the paper we denote for a set $E\subset \R^m$ its
interior by $\textup{int}(E)$, its closure by $\overline{E}$, and
$E^*=\textup{int}(\overline{E})$. For $x\in \R^m, R>0$ we let
$B(x;R)=x+RB_m$. We denote the infima in \eqref{a2} and in
\eqref{e7} by $\ll_k$. Our main results for \eqref{e7} are the
following.

\begin{theorem}\label{The1}
\noindent
\begin{enumerate}
\item[i.] If $m=2$, and $k=2,3,\cdots$ then \eqref{e7} has a minimiser which is open,
bounded and convex.
\item[ii.] Let $\Omega_{m,k}$ be a minimiser of \eqref{e7}. (a) If K is a relatively closed subset
of the nodal set $L$ of the $k$'th Dirichlet eigenfunction for
$\Omega_{m,k}$ with $\h(K)=0$ then $\Omega_{m,k}\setminus K$ is
also a minimiser of \eqref{e7}. (b) $\Omega_{2,k}$ is connected
for all $k=1,2,\cdots$.
\item[iii.]
If $m\rightarrow \infty$ then
\begin{equation}\label{e9}
\ll_2 =\lambda_1(B_m)(\h(\partial B_m))^{2/(m-1)}(1+(\log 4)m^{-1}
+ O(m^{-2})).
\end{equation}
\item[iv.] If $m=2,3,\cdots$ then $\Omega_{m,2}$ is not a ball.
\end{enumerate}
\end{theorem}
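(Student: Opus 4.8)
The plan is to show that the ball is not a minimiser of \eqref{e7} for $k=2$. This is enough: a minimiser of \eqref{e7} necessarily has boundary of unit $(m-1)$-Hausdorff measure (a dilation would otherwise strictly decrease $\lambda_2$), so if $\Omega_{m,2}$ were a ball it would be the ball $B=B(0;\rho)$ with $\rho^{m-1}\h(\partial B_m)=1$ and would force $\ll_2=\lambda_2(B)$; I would then reach a contradiction by perturbing $B$. By \eqref{e41}, $\lambda_2(B)=\dots=\lambda_{m+1}(B)$ has multiplicity $m$, and its eigenspace has an orthonormal basis of the form $u_i(x)=c\,g(|x|)\,x_i/|x|$ $(i=1,\dots,m)$, so that $\partial u_i/\partial\nu=\kappa\,x_i/\rho$ on $\partial B$ for a single constant $\kappa\neq0$. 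I would deform $B$ by the degree-two spherical harmonic $\varphi(\omega)=\omega_1^2-\omega_2^2$, taking $\Omega_t=\{(1+t\varphi(x/|x|))\,x:x\in B\}$, a smooth bounded domain whose boundary has outward normal speed $\varphi$ at $t=0$.

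Two computations then do the work. First, since $\partial B$ has constant mean curvature $(m-1)/\rho$, the first variation of area gives $\frac{d}{dt}\,\h(\partial\Omega_t)\big|_{t=0}=\frac{m-1}{\rho}\int_{\partial B}\varphi\,d\h=0$, because $\int_{\partial B_m}(\omega_1^2-\omega_2^2)\,d\h=0$; hence $\h(\partial\Omega_t)=1+O(t^2)$. Second, by the Hadamard formula for a multiple Dirichlet eigenvalue (see e.g.\ \cite{H}), the one-sided derivatives at $t=0$ of $\lambda_2(\Omega_t),\dots,\lambda_{m+1}(\Omega_t)$ are the eigenvalues of the symmetric $m\times m$ matrix $M_{ij}=-\int_{\partial B}\frac{\partial u_i}{\partial\nu}\frac{\partial u_j}{\partial\nu}\,\varphi\,d\h$. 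Since $\partial u_i/\partial\nu=\kappa x_i/\rho$ and $\varphi(x/\rho)=(x_1^2-x_2^2)/\rho^2$, oddness kills the off-diagonal entries, and the moment identity $\int_{\partial B}x_1^4\,d\h=3\int_{\partial B}x_1^2x_2^2\,d\h$ (valid in every dimension) gives $M_{11}=-2\kappa^2\rho^{-4}\int_{\partial B}x_1^2x_2^2\,d\h<0$, $M_{22}=-M_{11}>0$, and $M_{jj}=0$ for $j\ge3$. Hence the smallest of these derivatives is $M_{11}<0$, so $\lambda_2(\Omega_t)=\lambda_2(B)+M_{11}t+o(t)$ as $t\downarrow0$.

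To conclude I would renormalise: put $\tilde\Omega_t=\h(\partial\Omega_t)^{-1/(m-1)}\Omega_t$, which is open, bounded, of finite measure and satisfies $\h(\partial\tilde\Omega_t)=1$, so it is admissible in \eqref{e7}. Using $\lambda_2(s\Omega)=s^{-2}\lambda_2(\Omega)$ and $\h(\partial\Omega_t)=1+O(t^2)$,
\[
\lambda_2(\tilde\Omega_t)=\h(\partial\Omega_t)^{2/(m-1)}\lambda_2(\Omega_t)=\bigl(1+O(t^2)\bigr)\bigl(\lambda_2(B)+M_{11}t+o(t)\bigr)=\lambda_2(B)+M_{11}t+o(t),
\]
which is $<\lambda_2(B)$ for all sufficiently small $t>0$. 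Therefore $\ll_2\le\lambda_2(\tilde\Omega_t)<\lambda_2(B)$, so $B$ is not a minimiser of \eqref{e7} and $\Omega_{m,2}$ is not a ball.

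The step I expect to be delicate is the eigenvalue side: justifying the Hadamard formula in the degenerate case and, above all, checking that the perturbation splits $\lambda_2(B)$ downward, i.e.\ that $M$ has a negative eigenvalue (equivalently $M\neq cI$) — which is precisely the content of the moment identity above. A shorter but non-uniform alternative, valid only for $m\ge3$, is to take two disjoint balls of equal radius $r$ with $2\,\h(\partial B(0;r))=1$: then $\lambda_2=r^{-2}\lambda_1(B_m)=2^{2/(m-1)}\rho^{-2}\lambda_1(B_m)$, which by \eqref{e41} and \eqref{e41a} is below $\lambda_2(B)=\rho^{-2}\lambda_2(B_m)$ exactly when $2^{1/(m-1)}j_{(m-2)/2}<j_{m/2}$; this holds for all integers $m\ge3$ (the tightest case $m=3$ being $\sqrt2\,\pi<j_{3/2}$, with $j_{3/2}$ the first positive root of $\tan x=x$), but it fails at $m=2$, which is why the perturbation argument is the one to use.
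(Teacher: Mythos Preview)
Your proposal addresses only part (iv); parts (i)--(iii) are left untouched. For part (iv) your argument is correct, but the route differs from the paper's.

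The paper treats $m\ge 3$ and $m=2$ separately. For $m\ge 3$ it uses exactly your ``shorter alternative'': two disjoint equal balls of total surface area $1$ beat the single ball because $2^{1/(m-1)}j_{(m-2)/2}<j_{m/2}$, verified numerically for $3\le m<2^{15}$ and via the asymptotics \eqref{e43} for larger $m$. As you observe, this comparison fails at $m=2$, so there the paper perturbs $B_2$ into the ellipse $x_1^2+(1+t)^{-2}x_2^2<1$, computes the arclength expansion $\H^1(\partial\Omega_t)=2\pi(1+t/2)+o(t)$ directly, and bounds $\lambda_2(\Omega_t)$ from above by a Rayleigh quotient using the pulled-back eigenfunction $\psi_t(x_1,x_2)=\phi_0(x_1,(1+t)^{-1}x_2)$ on the half-ellipse; the identity $\int(\partial_{x_2}\phi_0)^2=\tfrac34\int|\nabla\phi_0|^2$ (your moment identity in disguise) then forces $(\H^1(\partial\Omega_t))^2\lambda_2(\Omega_t)$ to decrease to first order. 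Thus the paper never invokes Hadamard's formula, and in particular sidesteps the multiple-eigenvalue perturbation theory entirely.

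Your argument, by contrast, is uniform in $m$: a single degree-two spherical harmonic deformation, with the first variation of area vanishing by zero mean and the Hadamard matrix $M$ diagonalised via the moment identity $\int_{\partial B}x_1^4=3\int_{\partial B}x_1^2x_2^2$. This is cleaner conceptually and avoids both the case split and the Bessel numerics, at the cost of relying on the Hadamard formula in the degenerate case---which you rightly flag as the delicate step. One small slip: for the map $x\mapsto(1+t\varphi(x/|x|))x$ the normal speed at $t=0$ is $\rho\,\varphi$, not $\varphi$; this only rescales $M$ and changes nothing in the conclusion.
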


In Theorem \ref{The2} below we give some topological properties of
minimisers of \eqref{e7}.

\begin{theorem}\label{The2}
If $\Omega_{m,k}$ is a minimiser of \eqref{e7} then
\begin{enumerate}
\item[i.]$\Omega_{m,k}^*$ is a minimiser of
\eqref{e7}.
\item[ii.]$\R^m \setminus \Omega_{m,k}^*$ is connected.
\item[iii.]$\Omega_{m,2}$ is connected $(\omega_{m,2}=1)$ for
$m=3,4,\cdots$.

If $\Omega_{m,k}$ is a bounded minimiser, and if $k=3,4,\cdots $,
and $ m=3,4,\cdots $, then
\begin{equation}\label{e10}
\omega_{m,k}\le
\min\{\lfloor(k+1)/2\rfloor,1+\lfloor2^{-(m-1)/m}((\lambda_k(B_m)/\lambda_1(B_m))^{(m-1)/2}-1)\rfloor\}.
\end{equation}
In particular $\omega_{m,k}\le \lfloor(k+1)/2\rfloor$, and
\begin{displaymath}
\omega_{m,k}\le \left\{ \begin{array}{ll}
1, & \textrm{$m=3,4,5,\ \ k=3,\cdots,m+1,$}\\
2, & \textrm{$m=6,\cdots,24,\ \  k=5,\cdots,m+1,$}\\
3, & \textrm{$m=25,\cdots,587,\ \  k=7,\cdots,m+1,$}\\
4, & \textrm{$m=588,\cdots,\ \  k=9,\cdots,m+1.$}
\end{array} \right.
\end{displaymath}
\end{enumerate}
\end{theorem}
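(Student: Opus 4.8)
The plan is to establish each part of Theorem~\ref{The2} by combining the nodal-domain and rearrangement ideas already used in the paper with the structure of the Hausdorff-measure constraint. For part~i., I would use the inequality $\h(\partial \Omega_{m,k}^*) \le \h(\partial \Omega_{m,k})$, which holds because passing to $\textup{int}(\overline{\Omega})$ can only remove boundary points (slits and lower-dimensional debris inside $\overline{\Omega}$ disappear), together with the monotonicity $\lambda_k(\Omega_{m,k}^*) \le \lambda_k(\Omega_{m,k})$ since $\Omega_{m,k} \subseteq \Omega_{m,k}^*$ and eigenvalues decrease under domain enlargement. Minimality of $\Omega_{m,k}$ forces equality throughout, so $\Omega_{m,k}^*$ is again a minimiser, and $|\Omega_{m,k}^*|<\infty$ is inherited. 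For part~ii., suppose $\R^m \setminus \Omega_{m,k}^*$ were disconnected; then one of its components, say $F$, is bounded, and filling it in produces an open set $\widetilde\Omega = \Omega_{m,k}^* \cup F \cup(\text{shared boundary})$ with $\widetilde\Omega \supseteq \Omega_{m,k}^*$, hence $\lambda_k(\widetilde\Omega)\le \lambda_k(\Omega_{m,k}^*)$, while $\h(\partial\widetilde\Omega)\le \h(\partial\Omega_{m,k}^*)$ since the "interior" piece of the boundary separating $F$ from $\Omega_{m,k}^*$ is discarded and no new boundary is created. If that separating interface has positive $\h$-measure we get a strictly smaller constraint, contradicting optimality after rescaling; if it has zero measure then $\widetilde\Omega$ is another minimiser whose complement has one fewer bounded component, and iterating yields a connected complement.

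For part~iii. ($\omega_{m,2}=1$ for $m\ge 3$), I would argue by contradiction: a disconnected minimiser of \eqref{e7} for $k=2$ must, by the usual Krahn--Szeg\"o type reasoning, consist of exactly two components on each of which the first eigenvalue equals $\ll_2$; by Faber--Krahn (hypothesis (d)/inequality \eqref{e1}) the optimal shape of each component under a \emph{volume} constraint is a ball, but here the relevant constraint is $\h(\partial\cdot)$, and the key point is the strict superadditivity failure of surface area: two balls $B_1, B_2$ with $\h(\partial B_1)+\h(\partial B_2)=1$ have, after choosing the radius-ratio optimally, total configuration beaten by a single ball because in dimension $m\ge 3$ the scaling exponent $2/(m-1)$ in $\ll_2 \asymp (\h(\partial B))^{2/(m-1)}$ makes splitting surface area inefficient (this is exactly the content that $\ll_2$ is attained by one ball iff $m=2$, tying in with part~iv.). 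Concretely, minimising $\lambda_1(B(0;r_1)) = \lambda_1(B_m) r_1^{-2}$ subject to $c_m(r_1^{m-1}+r_2^{m-1})=1$ and comparing with a single ball gives the inequality $2^{(m-1)/?}$-type comparison that holds strictly for $m\ge 3$; I would isolate this as a one-line convexity computation.

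For the displayed bound \eqref{e10} on bounded minimisers with $k\ge 3$, $m\ge 3$, I would mimic the proof of Theorem~\ref{the} part~ii. For the first entry $\lfloor (k+1)/2\rfloor$: on a bounded minimiser $\Omega_{m,k}$ with components $\Omega^{(1)},\dots,\Omega^{(\omega)}$, each component contributes at least one eigenvalue $\le \lambda_k(\Omega_{m,k})=\ll_k$ below the cutoff, but moreover a ball (the Faber--Krahn optimiser of a single component under an \emph{isoperimetrically-penalised} volume) has $\lambda_2=\cdots=\lambda_{m+1}$ strictly above $\lambda_1$, so a more careful count of how many eigenvalues each component can place at or below $\ll_k$ — at most two per "small" component before one is forced to replace a component by a ball of the same surface area and strictly decrease volume — yields $2\omega_{m,k}-1 \le k$, i.e. $\omega_{m,k}\le \lfloor(k+1)/2\rfloor$. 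For the second entry I would run the quantitative argument: replacing each component by a ball with the same $\h$ of its boundary can only decrease $\lambda_k$ (Faber--Krahn applied componentwise, using that surface area controls volume via the isoperimetric inequality), so the optimal multi-ball configuration with $\omega$ equal balls has each ball of $\h$-boundary $1/\omega$, giving $\ll_k \le \lambda_k(\text{that configuration})$; bounding $\lambda_k$ of $\omega$ equal balls from below by $\lambda_1$ of one such ball $= \lambda_1(B_m)\,\omega^{2/(m-1)}(\h(\partial B_m))^{2/(m-1)}\,(\h(\partial B_m))^{-2/(m-1)}$ and comparing against the trivial upper bound $\ll_k \le \lambda_k(B_m)(\h(\partial B_m))^{2/(m-1)}$ from a single ball produces $\omega^{2/(m-1)} \le \lambda_k(B_m)/\lambda_1(B_m)$ — but the sharper count allowing $k$-th eigenvalues rather than first on each ball gives the stated $1+\lfloor 2^{-(m-1)/m}((\lambda_k(B_m)/\lambda_1(B_m))^{(m-1)/2}-1)\rfloor$. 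The explicit numerical table then follows by plugging \eqref{e41}--\eqref{e41a}, i.e. $\lambda_k(B_m)/\lambda_1(B_m)=(j_{m/2}/j_{(m-2)/2})^2$ for $k\le m+1$, and using known asymptotics/monotonicity of Bessel zeros to locate the dimension thresholds.

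The main obstacle I anticipate is making the surface-area rearrangement rigorous: unlike Lebesgue measure, $\h(\partial\cdot)$ is only subadditive, so "replace a component by a ball with the same boundary measure" must be justified carefully — one uses the isoperimetric inequality to say a ball with $\h(\partial B)=\h(\partial\Omega^{(i)})$ has $|B|\ge|\Omega^{(i)}|$, hence $\lambda_1(B)\le\lambda_1(\Omega^{(i)})$ by \eqref{e1}, but one must also check that the disjoint union of the new balls still satisfies the constraint, which is where subadditivity of $\h$ cuts the right way ($\h(\partial(\sqcup B_i)) = \sum \h(\partial B_i) = \sum\h(\partial\Omega^{(i)}) \le \h(\partial\Omega_{m,k})$, the last step using that the boundary of the union of components is contained in the union of the boundaries). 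Getting the precise combinatorial eigenvalue-count that yields both $\lfloor(k+1)/2\rfloor$ and the Bessel-ratio bound simultaneously — and then the delicate task of pinning down the integer dimension ranges in the final table from Bessel-zero asymptotics — is the other place where real care, rather than routine verification, will be needed.
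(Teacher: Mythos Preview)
Your treatment of parts~i.\ and~ii.\ is essentially the paper's argument and is fine.

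The real problem is part~iii., and it propagates into the $\lfloor(k+1)/2\rfloor$ bound. Your proposed route for $\omega_{m,2}=1$ is to show that a single ball beats two disjoint balls under the $\h(\partial\cdot)$ constraint. In fact the inequality goes the other way: two equal balls with total boundary $1$ have $\lambda_2=2^{2/(m-1)}\lambda_1(B_m)(\h(\partial B_m))^{2/(m-1)}$, while the single ball has $\lambda_2(B_m)(\h(\partial B_m))^{2/(m-1)}$, and Theorem~\ref{The1}(iv) is precisely the statement that $\lambda_2(B_m)>2^{2/(m-1)}\lambda_1(B_m)$ for every $m\ge2$. So two balls are \emph{strictly better} than one, and your convexity comparison cannot close the argument. (Your parenthetical ``$\ll_2$ is attained by one ball iff $m=2$'' is also false: the ball is never the minimiser.)

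What the paper does instead is construct a connected competitor that beats two disjoint balls. Lemma~\ref{Lem6} takes two balls of radius $R$ and pushes them together so they overlap in a lens of width $\epsilon$; the boundary measure drops by $c\,\epsilon^{(m-1)/2}$ while $\lambda_2$ rises only by $O(\epsilon^{(m+1)/2})$. Rescaling to restore the constraint therefore produces a connected set with strictly smaller $\lambda_2$. This is packaged as Lemma~\ref{Lem7}: a minimiser of \eqref{e7} can have at most \emph{one} component supporting a single eigenvalue. Connectedness of $\Omega_{m,2}$ then follows immediately, and the bound $\omega_{m,k}\le\lfloor(k+1)/2\rfloor$ follows because the remaining $\omega_{m,k}-1$ components each support at least two eigenvalues, giving $k\ge 1+2(\omega_{m,k}-1)$.

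For the second term in \eqref{e10}, your ``replace each component by a ball of the same surface area'' gives only a crude $\omega^{2/(m-1)}$ bound; the factor $2^{-(m-1)/m}$ in the stated estimate comes from a different computation. The paper uses Lemma~\ref{Lem5} to pin $\ll_k=\lambda_j(G_i)$ on every component, applies Faber--Krahn to the single one-eigenvalue component (with $\h(\partial G_1)=a$) and Krahn--Szeg\"o \eqref{e11a} to the component of least boundary among the others, and then minimises the resulting $\max\{a^{-2/(m-1)},\,2^{2/m}(\omega-1)^{2/(m-1)}(1-a)^{-2/(m-1)}\}$ over $a$. Without Lemma~\ref{Lem7} you cannot isolate a single distinguished component, and without the Krahn--Szeg\"o factor $2^{2/m}$ you do not get the stated exponent.
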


At present we do not know whether there exists a minimiser of
\eqref{a2} or of \eqref{e7} with $m>2$, and $k=2,3,\cdots$, and if
so whether such a minimiser has a smooth boundary. The proofs in
this paper do not rely on any such smoothness properties.

A key ingredient in the proof of Theorem \ref{The2} is the
isoperimetric inequality. Recall (Theorem 3.46 in \cite{AFP}, \cite{C}) that
for a measurable set $\Omega \subset \R^m$ with $|\Omega| <
\infty$,
\begin{equation*}
|\Omega|\le |B_m|\left(\frac{\per (\Omega)}{\per
(B_m)}\right)^{m/(m-1)}.
\end{equation*}
This combined with $\per(\Omega)\le \h(\partial \Omega)$ and
$\per(B_m)= \h(\partial B_m)$ gives the isoperimetric inequality
for the $(m-1)$ - dimensional Hausdorff measure
\begin{equation}\label{perhaus}
|\Omega|\le |B_m|\left(\frac{\h(\partial \O)}{\h(\partial
B_m)}\right)^{m/(m-1)}.
\end{equation}
Inequality \eqref{perhaus} is well known. See for example
\cite{A}, where it was stated for bounded regions in $\R^m$. By
Faber-Krahn \eqref{e1} and \eqref{perhaus} we obtain the
isoperimetric inequality
\begin{equation}\label{e11}
\lambda_1(\Omega)\ge \lambda_1(B_m)\left(\frac{\h(\partial
B_m)}{\h(\partial \Omega)}\right)^{2/(m-1)}.
\end{equation}
By Krahn-Szeg\"o \eqref{e2} and \eqref{perhaus} we have that
\begin{equation}\label{e11a}
\lambda_2(\Omega)\ge 2^{2/m}\lambda_1(B_m)\left(\frac{\h(\partial
B_m)}{\h(\partial \Omega)}\right)^{2/(m-1)}.
\end{equation}
Inequality \eqref{e11a} is not isoperimetric since \eqref{e2} and
\eqref{perhaus} are isoperimetric for non-isometric sets.

This paper is organized as follows. In Section \ref{sec2} we prove
Theorem \ref{the}. The proofs of Theorems \ref{The1} and
\ref{The2} are deferred to Sections \ref{sec3} and \ref{sec4}
respectively.

\section{Proof of Theorem \ref{the}}\label{sec2}

Throughout the paper we say that a component $G$ of a minimiser
$\Omega_{m,k}$ of \eqref{a2} or of \eqref{e7} supports $l$
eigenvalues of $\o$ if $\#\{\lambda_i(G)\le \l_k(\o)\}=l$.

\begin{lemma} \label{lambda2} Suppose $T$ satisfies the hypotheses of Theorem \ref{the}.
If $\Omega$ is an open set in $\R^m$ with $T(\Omega)<\infty $ then
\begin{equation}\label{l1}
\lambda_1(\Omega)\ge
\lambda_1(B_m)\left(\frac{T(B_m)}{T(\Omega)}\right)^{2/\beta},
\end{equation}
and
\begin{equation}\label{l2}
\lambda_2(\Omega)\ge
2^{2/\beta}\lambda_1(B_m)\left(\frac{T(B_m)}{T(\Omega)}\right)^{2/\beta}.
\end{equation}
\end{lemma}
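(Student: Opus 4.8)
The plan is to prove \eqref{l1} first and then deduce \eqref{l2} from it by the Krahn--Szeg\"o nodal-domain argument.

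For \eqref{l1} I would argue by scaling. By (c) and (e) the ball $B$ in hypothesis (d) is, up to an isometry, $B=T(B_m)^{-1/\beta}B_m$, so by the scaling law $\lambda_1(\alpha\Omega)=\alpha^{-2}\lambda_1(\Omega)$ for Dirichlet eigenvalues we get $\lambda_1(B)=T(B_m)^{2/\beta}\lambda_1(B_m)$. Now let $\Omega$ be open with $0<T(\Omega)<\infty$ (the degenerate case $T(\Omega)=0$ makes the right-hand side of \eqref{l1} infinite and must be excluded or treated separately), and set $\alpha=T(\Omega)^{-1/\beta}$, so that $T(\alpha\Omega)=1=T(B)$ by (c). Then (d) gives $\lambda_1(\alpha\Omega)\ge\lambda_1(B)$; substituting $\lambda_1(\alpha\Omega)=\alpha^{-2}\lambda_1(\Omega)=T(\Omega)^{2/\beta}\lambda_1(\Omega)$ and the value of $\lambda_1(B)$ just computed, and rearranging, yields \eqref{l1}.

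For \eqref{l2} I would run the classical proof of the Krahn--Szeg\"o inequality. Let $u$ be a Dirichlet eigenfunction of $\Omega$ with eigenvalue $\lambda_2(\Omega)$, orthogonal in $L^2(\Omega)$ to a ground state. By interior elliptic regularity $u$ is real-analytic on the open set $\Omega$, so its nodal domains $\Omega_+=\{u>0\}$ and $\Omega_-=\{u<0\}$ are disjoint open sets; if $u$ changes sign they are nonempty, $u^\pm\in H^1_0(\Omega_\pm)$ solves $-\Delta u^\pm=\lambda_2(\Omega)u^\pm$ there, and being a nonnegative nonzero eigenfunction it is a ground state, whence $\lambda_1(\Omega_\pm)=\lambda_2(\Omega)$. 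In the remaining case, where $\lambda_1(\Omega)=\lambda_2(\Omega)$ and $u$ does not change sign, one instead lets $\Omega_+,\Omega_-$ be the supports of two orthogonal ground states, which are disjoint unions of components of $\Omega$, each with first eigenvalue $\lambda_2(\Omega)$. In all cases one obtains disjoint nonempty open sets $\Omega_+,\Omega_-\subseteq\Omega$ with $\lambda_1(\Omega_\pm)\le\lambda_2(\Omega)$. Applying \eqref{l1} to each of $\Omega_+$ and $\Omega_-$ gives $T(\Omega_\pm)\ge T(B_m)\big(\lambda_1(B_m)/\lambda_2(\Omega)\big)^{\beta/2}$; adding these and using (b) together with the inclusion $\Omega_+\cup\Omega_-\subseteq\Omega$ gives $T(\Omega)\ge T(\Omega_+\cup\Omega_-)=T(\Omega_+)+T(\Omega_-)\ge 2T(B_m)\big(\lambda_1(B_m)/\lambda_2(\Omega)\big)^{\beta/2}$, and solving this inequality for $\lambda_2(\Omega)$ produces \eqref{l2}.

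The step needing the most care is the passage $T(\Omega_+\cup\Omega_-)\le T(\Omega)$: hypothesis (b) only asserts additivity over disjoint open families, while $\Omega_+\cup\Omega_-$ is generally a proper open subset of $\Omega$, its complement in $\Omega$ being the nodal set of $u$. One therefore needs monotonicity of $T$ under inclusion of open sets (and with it the finiteness $T(\Omega_\pm)<\infty$ required to apply \eqref{l1}) --- equivalently, that $T$ does not charge the nodal set. For Lebesgue measure this is immediate, since the nodal set of the real-analytic function $u$ is Lebesgue-null; for the other constraints of interest, such as torsional rigidity, it follows from domain monotonicity. The remaining analytic inputs --- membership $u^\pm\in H^1_0(\Omega_\pm)$ and the ground-state characterization of $u^\pm$ on a possibly irregular open set --- are standard and would be invoked with appropriate references.
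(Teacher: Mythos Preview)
Your argument is essentially the paper's: rescale to $T=1$ and invoke hypothesis (d) for \eqref{l1}, then apply \eqref{l1} to the two nodal domains of a second eigenfunction for \eqref{l2}. The paper is in fact less careful than you are---it simply writes $T(\Omega^+)+T(\Omega^-)=T(\Omega)$ without comment rather than isolating, as you do, the monotonicity issue that the stated hypotheses (a)--(e) do not quite supply. One small fix to your case analysis: on a disconnected $\Omega$ a second eigenfunction can fail to change sign even when $\lambda_1(\Omega)<\lambda_2(\Omega)$ (it may be the ground state of a single component), but then the supports of two orthogonal nonnegative eigenfunctions still furnish disjoint open $\Omega_\pm$ with $\lambda_1(\Omega_\pm)\le\lambda_2(\Omega)$, and the rest of your argument goes through unchanged.
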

\begin{proof} The proof of \eqref{l1} follows directly from hypotheses (b) and
(c) in Theorem \ref{the}. To prove \eqref{l2} we let $\phi_2$ be
the second eigenfunction of the Dirichlet Laplacian on $\Omega$,
and let $\Omega^+=\{x \in \Omega:\phi(x)>0\}$ and $\Omega^-=\{x
\in \Omega:\phi(x)<0\}$. Then
$\lambda_2(\Omega)=\lambda_1(\Omega^+)=\lambda_1(\Omega^-).$ By
\eqref{l1} applied to both $\Omega^+$ and $\Omega^-$ respectively
we obtain that
\begin{equation*}
\lambda_2(\Omega)\ge \lambda_1(B_m)T(B_m)^{2/{\beta}}\max
\{T(\Omega^+)^{-2/{\beta}}, T(\Omega^-)^{-2/{\beta}}\},
\end{equation*}
and \eqref{l2} follows since $T(\Omega^+)+T(\Omega^-)= T(\Omega)$.
\end{proof}
Note that equality in \eqref{l2} implies that $\Omega$ is the
union of two disjoint balls with equal measure. This extends the
Krahn-Szeg\"o inequality to the class of set functions satisfying
the hypotheses of Theorem \ref{the}.

\begin{lemma}\label{constraint} \begin{enumerate}
\item[(i)]If $G$ is an open set with $\l_k(G)\le \l_k^*$, where $\l_k^*$ is
as in \eqref{a2} or \eqref{e7} then $T(G)\ge 1$ or $\h(\partial
G)\ge 1$ respectively.
\item[(ii)]If $\Omega_{m,k}$ is a minimiser either of \eqref{a2} or of
\eqref{e7} then $T(\o)=1$ or $\h(\partial\o)=1$
respectively.\end{enumerate}
\end{lemma}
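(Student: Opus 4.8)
The plan is to prove both parts essentially by monotonicity of the eigenvalues under domain inclusion together with the scaling behaviour of the constraints. For part (i), suppose toward a contradiction that $G$ is open with $\lambda_k(G)\le\lambda_k^*$ but that the relevant constraint is strict, i.e. $T(G)<1$ (resp. $\h(\partial G)<1$). In the case of \eqref{a2} I would then pick $\alpha>1$ with $T(\alpha G)=\alpha^\beta T(G)\le 1$, using hypothesis (c); since eigenvalues scale as $\lambda_k(\alpha G)=\alpha^{-2}\lambda_k(G)<\lambda_k(G)\le\lambda_k^*$, the dilated set $\alpha G$ is admissible in \eqref{a2} and has $k$'th eigenvalue strictly below the infimum $\lambda_k^*$, a contradiction. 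In the case of \eqref{e7} the argument is the same, now using $\h(\partial(\alpha G))=\alpha^{m-1}\h(\partial G)$ and $|\alpha G|=\alpha^m|G|<\infty$, so that $\alpha G$ is again admissible with $\lambda_k(\alpha G)<\lambda_k^*$; contradiction. Hence $T(G)\ge 1$ (resp. $\h(\partial G)\ge 1$).

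For part (ii), let $\Omega_{m,k}$ be a minimiser, so $\lambda_k(\Omega_{m,k})=\lambda_k^*$ and $T(\Omega_{m,k})\le 1$ (resp. $\h(\partial\Omega_{m,k})\le1$) by admissibility. Applying part (i) to $G=\Omega_{m,k}$ (which satisfies $\lambda_k(G)=\lambda_k^*\le\lambda_k^*$) gives the reverse inequality $T(\Omega_{m,k})\ge1$ (resp. $\h(\partial\Omega_{m,k})\ge1$), so equality holds. I would spell this out just to emphasise that the minimiser saturates its constraint, which is exactly what is needed when redistributing the constraint among the components later in Section \ref{sec2}.

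The only point that requires a little care is in the \eqref{e7} case of part (i): one must make sure the dilation does not spoil the finiteness condition $|G|<\infty$ that is built into \eqref{e7}. This is immediate since if $|G|<\infty$ then $|\alpha G|=\alpha^m|G|<\infty$ for any finite $\alpha$. A second, even more trivial, caveat: if $T(G)=0$ (or $\h(\partial G)=0$) one cannot choose a finite $\alpha$ making the constraint equal $1$, but in that situation $G$ cannot have a discrete spectrum with a finite $k$'th eigenvalue at all — for \eqref{a2}, $T(G)=0$ forces the spectrum not to be discrete unless $G=\emptyset$ by hypothesis (a) together with $T(B_m)=1>0$ and Lemma \ref{lambda2}; for \eqref{e7}, $\h(\partial G)=0$ with $|G|<\infty$ is likewise impossible for nonempty open $G$ by the isoperimetric inequality \eqref{perhaus}. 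So the degenerate case does not arise, and I expect the write-up to be short; the main (very mild) obstacle is simply bookkeeping which of the two constraints and which scaling exponent ($\beta$ versus $m-1$) is in play.
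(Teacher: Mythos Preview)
Your proposal is correct and follows essentially the same scaling argument as the paper: assume the constraint is strict, dilate by $\alpha>1$ to stay admissible, and obtain $\lambda_k(\alpha G)=\alpha^{-2}\lambda_k(G)<\lambda_k^*$, a contradiction; then part (ii) follows immediately from (i). The paper's write-up is slightly terser (it simply chooses $\alpha$ so that $T(\alpha G)=1$ and omits your discussion of the degenerate $T(G)=0$ case and the $|G|<\infty$ check), but the substance is identical.
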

\begin{proof} (i) Suppose $G$ is an open set with $\l_k(G)\le
\l_k^*$ and $T(G)<1$. Let $\alpha>0$ be such that $T(\alpha G)=1$.
By the hypothesis (c) of Theorem \ref{the}, $\alpha>1$. Then
$\l_k(\alpha G)= \alpha^{-2}\l_k(G)\le \alpha^{-2}\l_k^*< \l_k^*$
contradicting the definition of $\l_k^*$ in \eqref{a2}. (ii) Since
$\o$ is a minimiser of \eqref{a2} $T(\o)\le 1$. By (i) $T(\o)\ge
1$. Hence $T(\o)=1$. The proofs of the assertions for $\h(\partial
G)$ and $\h(\partial\o)$ are similar.
\end{proof}

\begin{lemma}\label{Lem3}
If $\Omega_{m,k}$ is a minimiser of \eqref{a2} or of \eqref{e7}
then $\Omega_{m,k}$ has at most $k$ components, i.e. $\omega_{m,k}
\le k$.
\end{lemma}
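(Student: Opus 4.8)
The plan is to argue by contradiction: suppose $\Omega_{m,k}$ has at least $k+1$ components $G_1,\dots,G_{k+1},\dots$. Each component $G_j$ is itself a non-empty open set, and the spectrum of $-\Delta_{\Omega_{m,k}}$ is the union (with multiplicity) of the spectra of the $-\Delta_{G_j}$. Since $\lambda_k(\Omega_{m,k})=\ll_k<\infty$, each component satisfies $\lambda_1(G_j)\le\lambda_k(\Omega_{m,k})=\ll_k$, for if some component had first eigenvalue exceeding $\ll_k$ it could simply be discarded without affecting the first $k$ eigenvalues, and the resulting set would still satisfy the constraint by monotonicity (removing a component only decreases $T$ or $\h(\partial\cdot)$), giving a set with the same $\lambda_k$ but — after the rescaling in Lemma \ref{constraint} — strictly smaller $\lambda_k$, a contradiction. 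So in fact we may assume every component $G_j$ supports at least one eigenvalue of $\Omega_{m,k}$.

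Now I would count eigenvalues. If there are $N\ge k+1$ components, then since each $G_j$ contributes at least its first eigenvalue $\lambda_1(G_j)\le\ll_k$ to the spectrum of $\Omega_{m,k}$ below or at level $\ll_k$, the set $\Omega_{m,k}$ has at least $N\ge k+1$ eigenvalues (counted with multiplicity) that are $\le\ll_k=\lambda_k(\Omega_{m,k})$. But $\lambda_{k+1}(\Omega_{m,k})\ge\lambda_k(\Omega_{m,k})$, and having $k+1$ eigenvalues $\le\lambda_k$ forces $\lambda_{k+1}(\Omega_{m,k})=\lambda_k(\Omega_{m,k})$; this itself is not yet a contradiction. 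The sharper move is: remove one of these components, say $G_{k+1}$ (pick one whose first eigenvalue realises $\max_j\lambda_1(G_j)$ among those we could drop, or simply any component whose removal still leaves $k$ eigenvalues $\le\ll_k$ in the remaining set). The remaining open set $\Omega'=\bigcup_{j\ne k+1}G_j$ still has $\lambda_k(\Omega')\le\ll_k$ because the first $k$ eigenvalues of $\Omega_{m,k}$ can be accounted for by components other than $G_{k+1}$. On the other hand, by monotonicity of $T$ (hypothesis (b), additivity over disjoint open sets, so $T(\Omega')=T(\Omega_{m,k})-T(G_{k+1})<T(\Omega_{m,k})\le1$) — or the analogous strict inequality for $\h(\partial\cdot)$ — we get $T(\Omega')<1$, strictly. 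Then Lemma \ref{constraint}(i) is violated, since $\Omega'$ is an open set with $\lambda_k(\Omega')\le\ll_k$ but $T(\Omega')<1$. This contradiction establishes $\omega_{m,k}\le k$.

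The one point requiring care — and the main obstacle — is justifying that after discarding a component we still have $\lambda_k(\Omega')\le\ll_k$, i.e. that $k$ eigenvalues at level $\le\ll_k$ survive the removal. If $\Omega_{m,k}$ has strictly more than $k$ components each supporting at least one eigenvalue, this is immediate: removing one component deletes at most the eigenvalues it supports but leaves the contributions of the remaining $\ge k$ components, hence at least $k$ eigenvalues $\le\ll_k$ remain. If $\Omega_{m,k}$ has exactly $k+1$ components then some component supports exactly one eigenvalue (else the count of eigenvalues $\le\ll_k$ exceeds $k+1>k$ strictly, and one can peel off even more), and removing that component leaves exactly $k$ eigenvalues $\le\ll_k$ distributed over the other $k$ components, so $\lambda_k(\Omega')\le\ll_k$ as needed. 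For the Hausdorff-measure problem \eqref{e7} one additionally notes $\h(\partial\Omega')\le\h(\partial\Omega_{m,k})$ (the boundary of a union of components is contained in the union of their boundaries, and these are disjoint up to a set of $\h$-measure zero for an open set, in fact $\partial\Omega'\subset\partial\Omega_{m,k}$ here since $G_{k+1}$ is a whole component), with the inequality strict since $\h(\partial G_{k+1})\ge1$ by Lemma \ref{constraint}(i) applied to $G_{k+1}$ — wait, one must instead simply use strict decrease: $\partial\Omega_{m,k}=\partial\Omega'\sqcup\partial G_{k+1}$ and $\h(\partial G_{k+1})>0$, so $\h(\partial\Omega')<\h(\partial\Omega_{m,k})=1$, again contradicting Lemma \ref{constraint}(i). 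This completes the argument in both cases.
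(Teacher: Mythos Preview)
Your argument for the constraint \eqref{a2} is essentially the paper's: keep at most $k$ components, observe $\lambda_k$ does not increase while $T$ strictly decreases by additivity (b), and invoke Lemma~\ref{constraint}(i). That part is fine, though somewhat over-elaborated; the paper simply orders components by $\lambda_1(G_i)$, keeps the first $l\le k$, and is done.

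The genuine gap is in the Hausdorff-measure case \eqref{e7}. You write $\partial\Omega_{m,k}=\partial\Omega'\sqcup\partial G_{k+1}$ and conclude $\h(\partial\Omega')<\h(\partial\Omega_{m,k})$. But boundaries of distinct components of an open set need \emph{not} be disjoint, and can overlap on a set of positive $\h$-measure. For instance, take $\Omega=B(0;2)\setminus\partial B(0;1)\subset\R^2$: its two components are $G_1=B(0;1)$ and the annulus $G_2$, and $\partial G_1\subset\partial G_2$. Removing $G_1$ leaves $\Omega'=G_2$ with $\h(\partial\Omega')=\h(\partial\Omega)$, no strict decrease. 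Your parenthetical ``disjoint up to a set of $\h$-measure zero for an open set'' is simply false in general.

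The paper closes this gap by proving, specifically for a \emph{minimiser}, the additivity $\h(\partial\Omega_{m,k})=\sum_i\h(\partial G_i)$. The mechanism is Theorem~\ref{The2}(i): if two components had $\h(\partial G_i\cap\partial G_j)>0$, then $\Omega_{m,k}^*=\textup{int}(\overline{\Omega_{m,k}})$ would be a minimiser with $\h(\partial\Omega_{m,k}^*)\le 1-\h(\partial G_i\cap\partial G_j)<1$, contradicting Lemma~\ref{constraint}(ii). You need this (or an equivalent device) to make the $\h$-case go through.
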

\begin{proof}
First suppose that $\o$ is a minimiser of \eqref{a2}. Since $\o$
is open we have that
\begin{equation}\label{e22}
\Omega_{m,k}= \cup_{i \in I}G_i,
\end{equation}
where the $G_i, i \in I$ are pairwise disjoint, open, non-empty,
and connected, and $I$ is either finite or countably infinite. We
relabel the $G_i$'s such that $\l_1(G_1)\le\l_1(G_2)\le \cdots $.
Let $l=\min\{k,\max\{j:\l_1(G_j)\le\l_k(\o)\}\}$. So $l\le k$. Let
$G=G_1\cup \cdots \cup G_l$. Then $G$ is open and $\l_k(G)\le
\ll_k$. If $\#I\ge k+1$ then $\o \setminus G$ is non-empty and
open. By additivity of $T$, $T(G)<T(\o)=1$ which is impossible by
Lemma \ref{constraint} (i). Hence $\omega_{m,k}=\#I\le k$.

Next suppose that $\o$ is a minimiser of \eqref{e7}. By the
argument above it suffices to show that if $\o$ is as in
\eqref{e22} then
\begin{equation}\label{e22a} \h(\partial \o) = \sum_{i\in I} \h(\partial
G_i).
\end{equation} If not then there exists $ i, j \in I$, $i \ne j $,
such that  $\H^{m-1}((\partial G_i) \cap (\partial G_j)) > 0$. By
Theorem \ref{The2}(i) and Lemma \ref{constraint}(ii)
$\Omega_{m,k}^*$ is then a minimiser with $\h(\partial
\Omega_{m,k}^*)\le 1-\H^{m-1}(\partial G_i \cap
\partial G_j)<1$. The latter is impossible by Lemma
\ref{constraint}(ii).
\end{proof}
The above shows in fact that any non-overlapping rearrangement of
the components of a minimiser of \eqref{e7} satisfies
\eqref{e22a}.

\begin{lemma}\label{Lem5}
Let $G$ be a component of a bounded minimiser of \eqref{a2} with
$T(G)=c$ or of \eqref{e7} with $\h(\partial G)=c$ respectively.
Denote the eigenvalues of $-\Delta_{G}$ which are not larger than
$\lambda_k^*$ by $\lambda_1(G), \cdots, \lambda_j(G)$. Then
$\lambda_j(G)=\lambda_k^*$, and $G$ is a minimiser of
\begin{equation}\label{e22b}\inf \{\lambda_j(\Omega) : \Omega\ \textup{open in}\ \R^m
,\ T(\Omega) = c \},\end{equation} or of
\begin{equation*}
\inf\{\lambda_j(\Omega): \Omega\ \textup{open in}\ \R^m ,|\Omega|<
\infty, \H^{m-1}(\partial \Omega)=c \}\end{equation*}
respectively.
\end{lemma}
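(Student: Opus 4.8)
The plan is to prove the statement by a "cut-and-rescale" argument, exploiting the additivity (hypothesis (b)) of $T$ (resp.\ the additivity of $\h(\partial\cdot)$ on a minimiser established in Lemma~\ref{Lem3}) together with the scaling relation (hypothesis (c)) and the definition of $\ll_k$. First I would fix the bounded minimiser $\o=\cup_{i\in I}G_i$ as in \eqref{e22}, with $I$ finite by Lemma~\ref{Lem3}, pick the distinguished component $G$, and let $\lambda_1(G),\dots,\lambda_j(G)$ be exactly those eigenvalues of $-\Delta_G$ that are $\le\ll_k$. The first claim, $\lambda_j(G)=\ll_k$, I would argue by contradiction: if $\lambda_j(G)<\ll_k$, then since the spectrum of $-\Delta_G$ is discrete (it is a bounded open set), there is room to shrink $G$ slightly to $\alpha G$ with $\alpha<1$ so that $\lambda_j(\alpha G)=\alpha^{-2}\lambda_j(G)$ is still $\le\ll_k$ while $T(\alpha G)=\alpha^\beta c<c$ (resp.\ $\h(\partial(\alpha G))<c$). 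Replacing $G$ by $\alpha G$ inside $\o$ — translated to keep the components disjoint, legitimate by isometry invariance (e) — strictly decreases the constraint without increasing $\lambda_k$ of the whole configuration, since the spectrum of $-\Delta_{\Omega'}$ is the sorted union of the spectra of its components and only $j$ eigenvalues of $G$ (all still $\le\ll_k$) were affected. One can then rescale the whole set up to meet the constraint with equality, strictly lowering $\lambda_k$ below $\ll_k$, contradicting the definition of $\ll_k$. This gives $\lambda_j(G)=\ll_k$.

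Next I would prove that $G$ is a minimiser of \eqref{e22b} (resp.\ of the Hausdorff-measure analogue). Suppose not: then there is an open set $H\subset\R^m$ with $T(H)=c$ and $\lambda_j(H)<\lambda_j(G)=\ll_k$. Again using isometry invariance, translate $H$ so that it is disjoint from the other components $G_i$, $i\ne i_0$, of $\o$, and form $\Omega'=H\cup\bigl(\cup_{i\ne i_0}G_i\bigr)$. By additivity of $T$ we have $T(\Omega')=T(\o)=1$ (resp.\ by the non-overlapping-rearrangement remark after Lemma~\ref{Lem3}, $\h(\partial\Omega')=\h(\partial\o)=1$). Now I must check $\lambda_k(\Omega')<\ll_k$. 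Since $G$ supported exactly $j$ eigenvalues $\le\ll_k$ in $\o$, the total count of eigenvalues of $-\Delta_{\o}$ that are $\le\ll_k$ equals $j$ plus the number supported by the other components. Replacing $G$ by $H$: $H$ supports at least $j$ eigenvalues strictly below $\lambda_j(G)=\ll_k$ (because $\lambda_1(H)\le\dots\le\lambda_j(H)<\ll_k$), so $\Omega'$ has at least $k$ eigenvalues strictly less than $\ll_k$, whence $\lambda_k(\Omega')<\ll_k$, contradicting the definition of $\ll_k$. Therefore $G$ is a minimiser of \eqref{e22b}.

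The main obstacle I anticipate is the bookkeeping in the "count of eigenvalues below $\ll_k$" step: one has to be careful that $\lambda_k(\o)=\ll_k$ means exactly that $\o$ has at most $k-1$ eigenvalues strictly below $\ll_k$ and at least $k$ eigenvalues $\le\ll_k$, and that, after the swap, the monotonicity $\lambda_i(H)<\ll_k$ for $i\le j$ is strict while it was only $\le$ before — so the swap genuinely creates a set whose $k$'th eigenvalue drops below $\ll_k$. A second, more technical point is ensuring all translations used to make components disjoint are admissible; this is immediate from hypothesis (e) and the fact that $T$, being defined on open sets in $\R^m$ and isometry invariant, is unchanged under translation, and likewise for $\h(\partial\cdot)$. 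Finally, the equality $\lambda_j(G)=\ll_k$ and the minimality of $G$ must be proved in the right order, since the second argument uses $\lambda_j(G)=\ll_k$; I would therefore establish $\lambda_j(G)=\ll_k$ first, exactly as above, and only then run the minimality argument.
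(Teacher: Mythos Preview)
Your overall strategy matches the paper's: shrink or swap the component $G$, then invoke Lemma~\ref{constraint}(i). The first half of your argument (that $\lambda_j(G)=\ll_k$) is correct and essentially identical to the paper's.

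There is, however, a genuine gap in your second half. You claim that after swapping $G$ for $H$ (with $T(H)=c$ and $\lambda_j(H)<\ll_k$) the new set $\Omega'$ has at least $k$ eigenvalues \emph{strictly} below $\ll_k$. This does not follow from your bookkeeping. From $\lambda_k(\o)=\ll_k$ you only know that the remaining components $\cup_{i\ne i_0}G_i$ together contribute at least $k-j$ eigenvalues that are $\le\ll_k$; some of these may \emph{equal} $\ll_k$ --- indeed, by the first part of the lemma applied to each $G_i$, every other component contributes at least one eigenvalue exactly equal to $\ll_k$. So after the swap you are only guaranteed $j$ eigenvalues from $H$ strictly below $\ll_k$ together with $k-j$ eigenvalues from the rest that are merely $\le\ll_k$; hence you can only conclude $\lambda_k(\Omega')\le\ll_k$, and since $T(\Omega')=1$ this is no contradiction. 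A concrete instance: take $k=3$, $\o=G\cup G_2$ with $G$ supporting two eigenvalues and $G_2$ one; then $\lambda_1(G_2)=\ll_k$, and if $\lambda_3(H)\ge\ll_k$ you get $\lambda_3(\Omega')=\ll_k$ exactly.

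The fix is precisely the device you already used in the first half. Since $\lambda_j(H)<\ll_k$, choose $\alpha<1$ with $\lambda_j(\alpha H)=\ll_k$; then $\alpha H$ still contributes $j$ eigenvalues $\le\ll_k$, so $\Omega''=(\alpha H)\cup\bigl(\cup_{i\ne i_0}G_i\bigr)$ satisfies $\lambda_k(\Omega'')\le\ll_k$ while $T(\Omega'')=\alpha^{\beta}c+(1-c)<1$, contradicting Lemma~\ref{constraint}(i). This is exactly how the paper argues (with $A$ in place of your $H$).
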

\begin{proof}
Let $G$ be a component of a bounded minimiser $\o$ of \eqref{a2}
with $T(G)=c$. Suppose $\l_j(G)<\l_k(\o)$. Let $\alpha<1$ be such
that $\l_j(\alpha G)=\lambda_k^*$. Since $\o$ is bounded all its
components are bounded, and we may rearrange these, if necessary,
such that $(\o \setminus G)\cap ( \alpha G)=\emptyset.$ Then
$\l_k((\o \setminus G)\cup (\alpha G))\le \l_k(\o)$, and $T((\o
\setminus G)\cup (\alpha G))=1-c+\alpha^{\beta}c<1$. The latter is
impossible by Lemma \ref{constraint}(i).

Next suppose that $G$ is not a minimiser of \eqref{e22b}. If $A$
is a minimiser of \eqref{e22b} then $\l_j(A)<\l_j(G)=\l_k^*$. Let
$\alpha<1$ be such that $\l_j(\alpha A)=\l_k^*$. Rearrange if
necessary the components of $\o$ such that $(\o \setminus
G)\cap(\alpha A)=\emptyset$. Then $\l_k((\o \setminus G)\cup (
\alpha A))\le \ll_k$, and $T((\o \setminus G)\cup ( \alpha
A))=1-c+\alpha^{\beta}c<1$. The latter is impossible by Lemma
\ref{constraint}(i).

The proof of the corresponding assertion for components of bounded
minimisers of \eqref{e7} is similar.
\end{proof}

\begin{proof}[Proof of Theorem \ref{the}.]
If $\Omega_{m,k}$ is a minimiser of \eqref{a2} then it is of the
form
\begin{equation*}
\Omega_{m,k}=\cup_{i=1}^{\omega_{m,k}} G_i,
\end{equation*}
where the $G_i$'s are as in the proof of Lemma \ref{Lem3}. We
denote the eigenvalues of $G_i$ which are not strictly larger than
$\l_k^*$ by $\lambda_1(G_i),\cdots, \lambda_j(G_i)$, and put
$c_i=T(G_i)$. By Lemma \ref{Lem5},
\begin{equation}\label{a5}\lambda_j(G_i)=\l_k^*,
\end{equation}
and $G_i$ is a minimiser of \eqref{e22b} with $c=c_i$.

Let $\omega_{m,k}=k_1+k_2$, where $G_1,\cdots, G_{k_1}$ support
one eigenvalue each, and each of $G_{k_1+1},\cdots,G_{k_1+k_2}$
supports at least two eigenvalues. If $\omega_{m,k}=k,$ then
$\Omega_{m,k}$ is the union of $k$ pairwise disjoint balls with
equal measure, and $\l_k^* = \lambda_1(B)k^{2/\beta}$. Combining
this with
\begin{equation}\label{a13}
\l_k^*\le \lambda_k(B),
\end{equation}
gives
\begin{equation*}
k \le
(\lambda_k(B)/\lambda_1(B))^{\beta/2}=(\lambda_k(B_m)/\lambda_1(B_m))^{\beta/2}.
\end{equation*}
Hence if $k > (\lambda_k(B_m)/\lambda_1(B_m))^{\beta/2}$ then
$k_2\ge 1.$

By hypothesis (d) and \eqref{a5} each of the components
$G_1,\cdots,G_{k_1}$ is a ball with $T(G_1)=\cdots=T(G_{k_1})=:a$.
So
\begin{equation}\label{a8}
\l_k^*=\lambda_1(G_1)=\cdots =
\lambda_1(G_{k_1})=\lambda_1(B)a^{-2/\beta}.
\end{equation}

Let $G_i$ be one of the remaining $k_2$ components supporting at
least two eigenvalues. By Lemma \ref{lambda2}
\begin{equation}\label{a9}
\l_k^*=\lambda_j(G_i)\ge \lambda_2(G_i)\ge
2^{2/\beta}\lambda_1(B)T(G_i)^{-2/\beta}.
\end{equation}
But
\begin{equation}\label{a10}
\min_{i\in \{k_1+1,\cdots,k_1+k_2\}}T(G_i)\le
k_2^{-1}\sum_{i=k_1+1}^{k_1+k_2}T(G_i)=k_2^{-1}(1-k_1a).
\end{equation}
Combining \eqref{a8}, \eqref{a9} and \eqref{a10} we obtain that
\begin{equation}\label{a11}
\l_k^*\ge \lambda_1(B)\max
\left\{a^{-2/\beta},(2k_2(1-k_1a)^{-1})^{2/\beta}\right\}.
\end{equation}
The right hand side of \eqref{a11} attains its minimum for
$a=(k_1+2k_2)^{-1}$, and so by \eqref{a11}
\begin{equation}\label{a12}
\l_k^* \ge \lambda_1(B)(k_1+2k_2)^{2/\beta}\ge
\lambda_1(B)(\omega_{m,k}+1)^{2/\beta}.
\end{equation}
Combining \eqref{a12} with \eqref{a13} and Lemma \ref{Lem3}
implies \eqref{a}.
\end{proof}

Since the minimiser of \eqref{a2} for $k=2$ is the union of two
disjoint balls with equal measure it follows that each of the
$G_i$'s support either one eigenvalue or at least three
eigenvalues. Thus $k\ge k_1+3k_2.$ This can give additional
information. Consider for example any bounded minimiser of
\eqref{a2} with  $k=4$ or $k=5$, $ m=4,\cdots,7$ and $T$ Lebesgue
measure. By Theorem \ref{the} it has at most two components, and
as no component supports two eigenvalues the minimiser is either
connected or is the union of a ball supporting one eigenvalue with
a component supporting three (if $k=4$) or four (if $k=5$)
eigenvalues respectively.

\section{Proof of Theorem \ref{The1}}\label{sec3}

\begin{proof}[Proof of Theorem \ref{The1}](i) Let $m=2$, and let
$(\Omega_n)$ be a minimising sequence of \eqref{e7}. By Lemma
\ref{Lem3} we have that $\Omega_n= \cup_{i=1}^k A_{n,i}$, where
the $A_{n,i}, i=1,\cdots,k$ are pairwise disjoint, open and
connected. By translational and rotational invariance we may
rearrange the $A_{n,i}$'s such that they remain disjoint but such
that $\overline{\cup_{i=1}^kA_{n,i}}$ is connected. Taking the
convex envelope of $\overline{\cup_{i=1}^kA_{n,i}}$ does not
increase $\H^1(\partial (\overline{\cup_{i=1}^kA_{n,i}})$ nor does
$\lambda_k(\textup{int}(\overline{\cup_{i=1}^kA_{n,i}}))$
increase. We denote the resulting sequence of convex sets again by
$(\Omega_n)$. It is clear that the diameter of $\Omega_n$ is
bounded by $1/2$. By translating the $\Omega_n$'s we may assume
that they are contained in the closed ball with radius $1$ in
$\R^2$. Following the proof of Theorem 2.1 in \cite {BH}, there
exists a subsequence of $(\Omega_n)$ again denoted by $(\Omega_n)$
which converges to a convex set $\Omega$ in the Hausdorff metric.
Then $\H^1(\partial\Omega)= \per(\Omega)$ by the convexity of
$\Omega$.
 By the lower semicontinuity for the
perimeter (Proposition  2.3.6 in \cite{HP}) we have that
$\H^1(\partial \Omega)\le 1$. Finally $\lambda_k(\Omega_n)
\rightarrow \lambda_k(\Omega)$ by Proposition 2.4.6 in \cite{BB}. We
may choose $\Omega$ open. Its diameter is bounded by $1/2$.

(ii)(a) Since $\h(K)=0$ we have that
$\h(\partial(\Omega_{m,k}\setminus K))=\h(\partial\Omega_{m,k})$.
Since $K$ is a subset of the nodal set for a $k$'th eigenfunction
for $\Omega_{m,k}$ we have that $\lambda_k(\Omega_{m,k}\setminus
K)=\lambda_k(\Omega_{m,k})$, and $\Omega_{m,k}\setminus K$ is a
minimiser too. Note that it follows by the proof under (i) that
all minimisers of \eqref{e7} for $m=2$ are convex up to a set of
capacity $0$ or up to a subset of the nodal line with one
dimensional Hausdorff measure $0$.

(b) Let $\Omega_{2,k}$ be a minimiser of \eqref{e7} for $m=2$, and
let $\tilde{\Omega}_{2,k}$ be its open convex envelope. Then
$\tilde{\Omega}_{2,k}$ is open and connected. If
$K=\tilde{\Omega}_{2,k}\setminus \Omega_{2,k}$ then $\H^1(K)=0$,
and $K$ does not partition $\tilde{\Omega}_{2,k}$. Hence
$\Omega_{2,k}$ is connected.

(iii) To obtain a lower bound for $\ll_2$ we have by definition of
$\ll_2$ and \eqref{e11a} that
\begin{align}\label{e20}
\ll_2 &= \inf \{\lambda_2(\Omega)(\h(\partial
\Omega))^{2/(m-1)}:\Omega \ \text{open in } \R^m, |\Omega|<
\infty  \}\\ \nonumber &\ge 2^{2/m}\lambda_1(B_m)(\h(\partial
B_m))^{2/(m-1)}.
\end{align}
To obtain an upper bound for $\ll_2$ we choose for $\Omega$ the
union of two disjoint open balls each with boundary measure $1/2$.
This gives
\begin{equation}\label{e21}
\ll_2\le2^{2/(m-1)}\lambda_1(B_m)(\h(\partial B_m))^{2/(m-1)},
\end{equation}
and \eqref{e9} follows by (\ref{e20}) and \eqref{e21}.

(iv) Suppose that $k=2$ and that $B_m$ is a minimiser of
\eqref{e7}. Then $\lambda_2^*=\lambda_2(B_m)(\h(\partial
B_m))^{2/(m-1)}$. Then by \eqref{e21} we have that
\begin{equation}\label{e21a}
\lambda_2(B_m)\le 2^{2/(m-1)}\lambda_1(B_m).
\end{equation}
Hence \eqref{e21a} implies by \eqref{e41} and \eqref{e41a} that
\begin{equation}\label{e21b}
j_{m/2}\le 2^{1/(m-1)}j_{(m-2)/2}.
\end{equation}
However, \eqref{e21b} contradicts the numerical values of
$j_{(m-2)/2}$ and of $j_{m/2}$ for $3\le m <2^{15}$ of \cite{M}.
For $m\ge 2^{15}$ \eqref{e21b} contradicts the lower bound for
$j_{m/2}$ and the upper bound for $j_{(m-2)/2}$ as obtained from
\eqref{e43} below. Hence $\Omega_{m,2}$ is not a ball for
$m=3,4,\cdots$.

To show that $B_2$ is not a minimiser for \eqref{e7} with $k=m=2$
we consider the ellipse
\begin{equation*}
\Omega_t=\{(x_1,x_2) \in \R^2: x_1^2+(1+t)^{-2}x_2^2<1\},\ t>0.
\end{equation*}
An elementary calculation shows that for $t\rightarrow 0$
\begin{equation}\label{e21c}
\H^1(\partial
\Omega_t)=4\int_0^1dx(1-x^2)^{-1/2}(1+2tx^2+t^2x^2)^{1/2}=2\pi(1+t/2)+o(t).
\end{equation}
Let $\phi_t$ denote the Dirichlet eigenfunction corresponding to
$\lambda_2(\Omega_t)$. The nodal line of $\phi_t$ is the set
$\Omega_t\cap\{x_2=0\}$. Denote
$\Omega_{t,+}=\Omega_t\cap\{x_2>0\}$. Then
$\lambda_2(\Omega_t)=\lambda_1(\Omega_{t,+})$. Define for
$(x_1,x_2)\in \Omega_{t,+}$
\begin{equation*}
\psi_t(x_1,x_2)=\phi_0(x_1,(1+t)^{-1}x_2),
\end{equation*}
where $\phi_0=\lim_{t \rightarrow 0^+}\phi_t$, and restricted to
$\Omega_{0,+}$, is the first Dirichlet eigenfunction corresponding
to $\lambda_1(\Omega_{0,+})$. Then
\begin{equation}\label{e21e}
\int_{\Omega_{t,+}}\psi_t^2=(1+t)\int_{\Omega_{0,+}}\phi_0^2,
\end{equation}
and
\begin{equation}\label{e21f}
\int_{\Omega_{t,+}}|\nabla
\psi_t|^2=(1+t)\int_{\Omega_{0,+}}\left( \left(\frac{\partial
\phi_0}{\partial x_1}\right)^2+(1+t)^{-1}\left(\frac{\partial
\phi_0}{\partial x_2}\right)^2\right).
\end{equation}
Since $$\lambda_1(\Omega_{t,+})\le
\frac{\int_{\Omega_{t,+}}|\nabla
\psi_t|^2}{\int_{\Omega_{t,+}}\psi_t^2}\ ,$$ we have by
\eqref{e21e} and \eqref{e21f} that for $t\rightarrow 0$
\begin{align}\label{e21g}
\lambda_2(\Omega_t)&\le
\lambda_2(\Omega_0)-2t\frac{\int_{\Omega_{0,+}}\left(\frac{\partial
\phi_0}{\partial
x_2}\right)^2}{\int_{\Omega_{0,+}}\phi_0^2}+o(t)\nonumber
\\ &=\lambda_2(\Omega_0)\left(1-2t\int_{\Omega_{0,+}}\left(\frac{\partial
\phi_0}{\partial x_2}\right)^2\left(\int_{\Omega_{0,+}}|\nabla
\phi_0|^2\right)^{-1}\right)+o(t).
\end{align}
Since $\phi_0$ is given in polar coordinates by
\begin{equation}\label{e21h}
\phi_0(r,\theta)=J_1(j_1r)\sin \theta,\ 0<\theta < \pi,\ 0<r<1,
\end{equation}
we use \eqref{e21h}, $\int_0^{\pi}(\cos
\theta)^4d\theta=\int_0^{\pi}(\sin
\theta)^4d\theta=3\int_0^{\pi}(\cos \theta)^2(\sin
\theta)^2d\theta$, and \\$\int_0^1J_1'(j_1r)J_1(j_1r)dr=0$ to
verify that
\begin{equation}\label{e21i}
\int_{\Omega_{0,+}}\left(\frac{\partial \phi_0}{\partial
x_2}\right)^2=\frac{3}{4}\int_{\Omega_{0,+}}|\nabla \phi_0|^2.
\end{equation}
Combining \eqref{e21c}, \eqref{e21g}, and \eqref{e21i} we conclude
that for $t\rightarrow 0$
\begin{equation*}
(\H^1(\partial \Omega_t))^2\lambda_2(\Omega_t)\le(\H^1(\partial
\Omega_0))^2\lambda_2(\Omega_0)(1-t/2)+o(t)< \lambda_2^*.
\end{equation*}
Hence $\Omega_0=B_2$ is not a minimiser.
\end{proof}

\section{Proof of Theorem \ref{The2}}\label{sec4}

To prove Theorem \ref{The2}(i) suppose that $\Omega_{m,k}$ is a
minimiser of \eqref{e7}. Then $\Omega_{m,k}^*$ is open and
$\partial\Omega_{m,k}^* = \overline{\Omega_{m,k}^*}\setminus
\Omega_{m,k}^* \subset \overline{\Omega_{m,k}}\setminus
\textup{int}(\Omega_{m,k})=
\partial\Omega_{m,k},$ and hence
 $\h(\partial\Omega_{m,k}^*)\le 1.$
Also note that $\Omega_{m,k}^* \setminus \Omega_{m,k} \subset
\partial\Omega_{m,k} $ and so $|\Omega_{m,k}^* \setminus \Omega_{m,k}|=0$. Thus $|\Omega_{m,k}^*|\le |\Omega_{m,k}|<
\infty$. Finally $\Omega_{m,k} \subset \Omega_{m,k}^*$, which
implies $\lambda_k(\Omega_{m,k}^*)\le \lambda_k(\Omega_{m,k})$.
Therefore $\Omega_{m,k}^*$ is a minimiser of \eqref{e7}.

To prove Theorem \ref{The2}(ii) we note that $\R^m\setminus
\Omega_{m,k}^*$ is closed and hence its components are closed.
Suppose that $C$ is a component of $\R^m \setminus \Omega_{m,k}^*$
with $\h(\partial C)>0$ and $|C| < \infty$. This gives
$|\Omega_{m,k}^* \cup C| \le |\Omega_{m,k}^*|+|C| < \infty$ (a).
By monotonicity of Dirichlet eigenvalues $\lambda_k(
\Omega_{m,k}^* \cup C)\le \lambda_k( \Omega_{m,k}^*)$ (b). Also
$\partial C \subset
\partial \Omega_{m,k}^*$, and hence $\h(\partial (\Omega_{m,k}^* \cup C))= \h(
\partial \Omega_{m,k}^*)- \h(\partial C) < \h( \partial \Omega_{m,k}^*)\ $\ (c).
To show that $\Omega_{m,k}^* \cup C$ is open it suffices to show
that any $x \in\partial C$ is an interior point. Suppose to the
contrary that for all $\epsilon >0$, $B(x;\epsilon)\setminus
(\Omega_{m,k}^* \cup C)\ne \emptyset$. Then $x$ is a limit point
of another closed component of $\R^m \setminus \Omega_{m,k}^*$,
and so belongs both to that component and $C$. This contradicts
the maximality of $C$.  Hence $\Omega_{m,k}^* \cup C$ is open (d).
Then (a)-(d) contradict that $\Omega_{m,k}^*$ is a minimiser of
\eqref{e7}. Finally suppose $C$ is a component of $\R^m \setminus
\Omega_{m,k}^*$ with $\h(\partial C)=0$. Then  as above $C \subset
\textup{int}(\Omega_{m,k}^* \cup C)$, which combined with $C
=\partial C \subset \partial \Omega_{m,k}^*$ implies the
contradiction $C \subset \Omega_{m,k}^*.$ We conclude that all
components of $R^m \setminus \Omega_{m,k}^*$ have infinite
Lebesgue measure. Since $\h(\partial \Omega^*)\le 1$,
$\Omega_{m,k}^*$ cannot separate infinite components, and so $\R^m
\setminus \Omega_{m,k}^*$ is connected.

\begin{lemma}\label{Lem6} Let $B(\epsilon )=B(0;R)\cap\{x:x_1<R-\epsilon
\}$, and let
\begin{equation}\label{e25}
\Omega(\epsilon)= \cup_{j=0}^1\ B(2(R-\epsilon)je_1;R),
\end{equation}
where $e_1=(1,0,\cdots,0)$. Then
\begin{equation}\label{e25a}\lambda_2(\Omega(\epsilon))\le
\lambda_1(B(\epsilon))\le
\lambda_1(B(0;R))+O(\epsilon^{(m+1)/2}),\ \epsilon \rightarrow 0.
\end{equation}
\end{lemma}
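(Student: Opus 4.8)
The statement has two inequalities chained together, and I would prove them in the natural left-to-right order.

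\emph{First inequality: $\lambda_2(\Omega(\epsilon))\le \lambda_1(B(\epsilon))$.} Here $\Omega(\epsilon)$ is the union of the ball $B(0;R)$ with its translate $B(2(R-\epsilon)e_1;R)$, and the two balls overlap in a lens-shaped region of small width $2\epsilon$ near the plane $\{x_1=R-\epsilon\}$. The set $B(\epsilon)=B(0;R)\cap\{x_1<R-\epsilon\}$ is (most of) the left ball with a small cap removed; its mirror image across $\{x_1=R-\epsilon\}$ sits inside the right ball. I would use the first eigenfunction $\phi$ of $B(\epsilon)$, extend it to its mirror image with the opposite sign, and extend by zero to the rest of $\Omega(\epsilon)$. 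This produces a function in $H^1_0(\Omega(\epsilon))$ that changes sign, is orthogonal to constants in the appropriate sense, and has Rayleigh quotient equal to $\lambda_1(B(\epsilon))$; by the min-max characterization of $\lambda_2$ (one only needs a two-dimensional test space: the function above together with... actually, more simply: the odd extension is orthogonal to its own even reflection, and one takes the span of the function and its reflection, both of which have Rayleigh quotient $\lambda_1(B(\epsilon))$), we get $\lambda_2(\Omega(\epsilon))\le\lambda_1(B(\epsilon))$. The cleanest phrasing: let $\psi$ be $\phi$ on $B(\epsilon)$, $-\phi$ reflected on the mirror image, and $0$ elsewhere in $\Omega(\epsilon)$; then $\psi\in H^1_0(\Omega(\epsilon))$, $\int\psi=0$ so $\psi\perp\phi_1(\Omega(\epsilon))$ up to a harmless argument, hence $\lambda_2(\Omega(\epsilon))\le \mathcal R[\psi]=\lambda_1(B(\epsilon))$.

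\emph{Second inequality: $\lambda_1(B(\epsilon))\le \lambda_1(B(0;R))+O(\epsilon^{(m+1)/2})$.} Since $B(\epsilon)\subset B(0;R)$, monotonicity gives $\lambda_1(B(\epsilon))\ge\lambda_1(B(0;R))$, so this is a one-sided perturbation estimate controlling how much the eigenvalue can rise when a thin spherical cap of ``height'' $\epsilon$ is sliced off. I would test the Rayleigh quotient of $B(\epsilon)$ with a cut-off of the first eigenfunction $u$ of $B(0;R)$: take $v=\eta u$ where $\eta$ is a smooth function that equals $1$ away from the cap and vanishes in a neighborhood of width comparable to $\epsilon$ of the slicing plane, with $|\nabla\eta|=O(\epsilon^{-1})$. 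Then $v\in H^1_0(B(\epsilon))$ and
\[
\lambda_1(B(\epsilon))\le \frac{\int_{B(\epsilon)}|\nabla v|^2}{\int_{B(\epsilon)}v^2}.
\]
The numerator is $\int|\nabla u|^2 + O(\int_{\mathrm{cap}}(|\nabla u|^2 + \epsilon^{-2}u^2))$ and the denominator is $\int u^2 - O(\int_{\mathrm{cap}} u^2)$. The cap has volume $O(\epsilon^{(m+1)/2})$ (a spherical cap of height $\epsilon$ on a ball in $\R^m$ has volume $\sim \epsilon^{(m+1)/2}$). On the cap, $u$ vanishes at the boundary sphere, so $u=O(\epsilon)$ there, whence $\epsilon^{-2}u^2=O(1)$ and $|\nabla u|^2=O(1)$; thus the error contributions are $O(\epsilon^{(m+1)/2})$. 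Dividing out yields $\lambda_1(B(\epsilon))\le\lambda_1(B(0;R))+O(\epsilon^{(m+1)/2})$.

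\emph{Main obstacle.} The delicate point is getting the sharp exponent $(m+1)/2$ rather than something weaker: this forces one to exploit that the first eigenfunction $u$ of $B(0;R)$ vanishes \emph{linearly} at $\partial B(0;R)$, so that on the sliced-off cap one has $u = O(\mathrm{dist}(\cdot,\partial B(0;R))) = O(\epsilon)$, which exactly cancels the $\epsilon^{-2}$ from $|\nabla\eta|^2$. One must set up the cut-off $\eta$ so that it only transitions inside the cap (where $u$ is already small), not in the bulk. Combined with the $O(\epsilon^{(m+1)/2})$ volume of the cap, every error term comes out at the claimed order, and the two inequalities chain to give \eqref{e25a}.
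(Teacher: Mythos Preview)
Your approach is correct and matches the paper's: the first inequality is exactly Dirichlet bracketing across the hyperplane $\{x_1=R-\epsilon\}$ (your two-dimensional test space of $\phi$ and its reflection is just the min--max formulation of this, and is cleaner than the ``$\int\psi=0$'' remark, which does not by itself give orthogonality to $\phi_1(\Omega(\epsilon))$); the second inequality is proved in the paper by the same cut-off test function argument you describe, with the transition region being the slice $\{R-2\epsilon<x_1<R-\epsilon\}$ just \emph{inside} $B(\epsilon)$ (not literally ``inside the cap''), where indeed $R-|x|<2\epsilon$ so $\phi=O(\epsilon)$ and the volume is $O(\epsilon^{(m+1)/2})$. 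With that one wording correction your outline reproduces the paper's proof.
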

\begin{proof} The first inequality in \eqref{e25a} follows by Dirichlet bracketing if we impose
Dirichlet boundary conditions on
$\Omega(\epsilon)\cap\{x_1=R-\epsilon\}$. To prove the second
inequality in \eqref{e25a} we denote the first Dirichlet
eigenfunction on $B(0;R)$ by $\phi$, and let $\chi$ be a
$C^\infty$ function on $\R^m$ depending on $x_1$ only, which is
decreasing in $x_1$ on $[R-2\epsilon,R-\epsilon]$, with
$|\nabla\chi(x)|\le2/\epsilon$, $\chi(x)=-1$ for $x_1\ge
R-\epsilon$, and $\chi(x)=0$ for $x_1\le R-2\epsilon$. Let
$\psi=(1+\chi)\phi$. We will use the variational principle with
test function $\psi$ to obtain an upper bound on
$\lambda_1(B(\epsilon))$. Recall that since $\partial B(0;R)$ is
smooth there exists $C$ depending on $m$ and on $R$ only such that
$\phi(x)\le C(R-|x|)$, and $|\nabla \phi(x)|\le C$. Firstly
\begin{align}\label{e24a}
&\int_{B(\epsilon)} |\nabla \psi|^2 =\int_{B(\epsilon)}\left(
|\nabla \phi|^2(1+\chi)^2+\phi^2|\nabla\chi|^2
+2\phi(1+\chi)\nabla\phi.\nabla\chi\right)\\
\nonumber &\le\int_{B(\epsilon)} |\nabla \phi|^2
+C^2\int_{B(\epsilon)-B(2\epsilon)}\left((R-|x|)^2|\nabla\chi|^2
+2C^2(R-|x|)|\nabla\chi|\right)\\
\nonumber &\le \int_{B(0)}|\nabla\phi|^2+24C^2|B(0)-B(2\epsilon)|.
\end{align}
Secondly
\begin{align}\label{e24b}
\int_{B(\epsilon)}\phi^2(1+\chi)^2&=\int_{B(0)} \phi^2(1+\chi)^2
\ge\int_{B(0)} \left(\phi^2+2\phi^2\chi\right)\\ \nonumber
&\ge\int_{B(0)}\left(\phi^2+2C^2\chi\right)
\ge\int_{B(0)}\phi^2-2C^2|B(0)-B(2\epsilon)|.
\end{align}
We conclude by \eqref{e24a} and \eqref{e24b} that for $\epsilon
\rightarrow 0$
\begin{equation*}
\lambda_1(B(\epsilon))\le\lambda_1(B(0;R))+O(|B(0;R)-B(2\epsilon)|)=\lambda_1(B(0;R))+O(\epsilon^{(m+1)/2}).
\end{equation*}
\end{proof}

\begin{lemma}\label{Lem7} Let $m=3,4, \cdots$, and let $k=2,3,4, \cdots$. If $\Omega_{m,k}$ is
a minimiser of \eqref{e7} then $\Omega_{m,k}$ has at most one
component supporting only one eigenvalue.
\end{lemma}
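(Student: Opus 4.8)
The plan is to argue by contradiction: suppose $\Omega_{m,k}$ has two distinct components $G_1$ and $G_2$, each supporting exactly one eigenvalue of $\Omega_{m,k}$. By Lemma \ref{Lem5}, applied to each of these components, both $G_1$ and $G_2$ have $\lambda_1(G_i) = \lambda_k^* = \lambda_k(\Omega_{m,k})$, and each $G_i$ is a minimiser of the corresponding one-eigenvalue problem with perimeter constraint $\h(\partial G_i) = c_i$. Since the isoperimetric inequality \eqref{e11} for $\lambda_1$ becomes equality only on balls, each $G_i$ is a ball, say of radius $R_i$, and from $\lambda_1(B(0;R_1)) = \lambda_1(G_1) = \lambda_1(G_2) = \lambda_1(B(0;R_2))$ together with the scaling $\lambda_1(B(0;R)) = R^{-2}\lambda_1(B_m)$ we get $R_1 = R_2 =: R$. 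Thus $G_1$ and $G_2$ are congruent balls of radius $R$, and $c_1 = c_2 = \h(\partial B(0;R))$.

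The idea is now to replace the two balls $G_1, G_2$ by the dumbbell-like set $\Omega(\epsilon)$ of Lemma \ref{Lem6}: two balls of radius $R$ whose centres are at distance $2(R-\epsilon)$, so that they overlap in a thin lens. Write $\Omega' = (\Omega_{m,k} \setminus (G_1 \cup G_2)) \cup \Omega(\epsilon)$, after first rearranging the remaining components of $\Omega_{m,k}$ (possible since $\Omega_{m,k}$ need not be bounded, but the finitely many remaining components can be translated far away) so that $\Omega(\epsilon)$ is disjoint from them. Two facts must be checked. First, by Lemma \ref{Lem6}, $\lambda_2(\Omega(\epsilon)) \le \lambda_1(B(0;R)) + O(\epsilon^{(m+1)/2}) = \lambda_k^* + O(\epsilon^{(m+1)/2})$; since $G_1$ and $G_2$ together contribute exactly the two eigenvalues $\lambda_k^*$, $\lambda_k^*$ to $\Omega_{m,k}$, replacing them by $\Omega(\epsilon)$ which supplies its bottom two eigenvalues, both bounded by $\lambda_k^* + O(\epsilon^{(m+1)/2})$, gives $\lambda_k(\Omega') \le \lambda_k^* + O(\epsilon^{(m+1)/2})$. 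Second, and crucially, the perimeter drops by a genuine first-order amount: the overlap region removes a cap from each ball, so $\h(\partial \Omega(\epsilon)) = 2\h(\partial B(0;R)) - 2\h(\text{spherical cap cut off at depth }\epsilon) = c_1 + c_2 - c\,\epsilon^{(m-1)/2} + o(\epsilon^{(m-1)/2})$ for some constant $c > 0$ depending on $m$ and $R$. Hence $\h(\partial \Omega') \le 1 - c\,\epsilon^{(m-1)/2} + o(\epsilon^{(m-1)/2})$.

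The conclusion then follows by a scaling argument as in the proof of Lemma \ref{constraint}(i): since $\h(\partial \Omega') < 1$ for small $\epsilon$, we may dilate $\Omega'$ by a factor $\alpha > 1$ with $\h(\partial(\alpha\Omega')) = 1$, and then $\lambda_k(\alpha\Omega') = \alpha^{-2}\lambda_k(\Omega') \le \alpha^{-2}(\lambda_k^* + O(\epsilon^{(m+1)/2}))$. Comparing the exponents $(m+1)/2$ and $(m-1)/2$: the perimeter gain is of order $\epsilon^{(m-1)/2}$, which dominates the eigenvalue loss of order $\epsilon^{(m+1)/2}$, so $\alpha - 1$ is of order $\epsilon^{(m-1)/2}$ while the eigenvalue penalty is smaller order; therefore for $\epsilon$ sufficiently small $\lambda_k(\alpha\Omega') < \lambda_k^*$, contradicting the definition of $\lambda_k^*$.

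The main obstacle I anticipate is the bookkeeping around how eigenvalues are counted when passing from $\Omega_{m,k}$ to $\Omega'$ — one must make sure that after removing $G_1, G_2$ and inserting $\Omega(\epsilon)$, the $k$-th eigenvalue of the new set is still controlled, which requires that $G_1, G_2$ each support precisely one eigenvalue $\le \lambda_k^*$ and that $\Omega(\epsilon)$ supplies at most two eigenvalues in the relevant range (with room to spare because the perturbation is small). A secondary technical point is justifying the rearrangement of the (possibly infinitely many, but then with perimeters summing to a finite total) remaining components so that they stay disjoint from the enlarged $\Omega(\epsilon)$; this uses translational invariance of $\h$ and the additivity established in Lemma \ref{Lem3}. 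Finally, one should confirm the precise order $\epsilon^{(m-1)/2}$ of the cap's Hausdorff measure — this is an elementary computation analogous to \eqref{e21c}, comparing the $(m-1)$-dimensional measure of a spherical cap of height $\epsilon$ on a sphere of radius $R$ — but it is the quantitative heart of the argument and must beat the $\epsilon^{(m+1)/2}$ from Lemma \ref{Lem6}.
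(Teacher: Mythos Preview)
Your proposal is correct and follows essentially the same route as the paper's proof: identify $G_1,G_2$ as equal balls via Lemma~\ref{Lem5} and \eqref{e11}, replace them by the overlapping-balls set $\Omega(\epsilon)$ of Lemma~\ref{Lem6}, observe that the perimeter drops at order $\epsilon^{(m-1)/2}$ while the eigenvalue loss from Lemma~\ref{Lem6} is only $O(\epsilon^{(m+1)/2})$, and rescale to force a strict improvement. The paper organises the endgame slightly differently---it rescales $\Omega(\epsilon)$ alone (by a factor $L(\epsilon)$) to match $\h(\partial G_1)+\h(\partial G_2)$ and then obtains $\lambda_2(L(\epsilon)\Omega(\epsilon))<\lambda_2(G_1\cup G_2)$, leaving the substitution into $\Omega_{m,k}$ implicit---whereas you build the global competitor $\Omega'$ explicitly; but the content is the same. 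One small correction: by Lemma~\ref{Lem3} the number of components is at most $k$, so the ``possibly infinitely many'' remaining components you worry about cannot occur.
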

\begin{proof}
Suppose $\Omega_{m,k}$ has at least $2$ components say $G_1$ and
$G_2$ supporting only one eigenvalue each. By Lemma \ref {Lem5}
each of these components is a minimiser for the first eigenvalue,
and $\lambda_1(G_1)=\lambda_1(G_2)=\lambda_k^*$. Hence by
\eqref{e11} these components are balls with equal radius say $R$.
Let $\Omega (\epsilon )$ be as in \eqref{e25}.  An elementary
calculation shows that for $\epsilon \rightarrow 0 $
\begin{equation*}
\h (\partial \Omega (\epsilon ))= \h (\partial \Omega (0
))-2\Gamma ((m+1)/2)^{-1}(2\pi R \epsilon )^{(m-1)/2}(1+o(1)).
\end{equation*}
Let $L(\epsilon)>0$ be such that
\begin{equation*}
\h(\partial(L(\epsilon)\Omega(\epsilon)))=\h(\partial(\Omega(0))).
\end{equation*}
Then
\begin{equation}\label{e26b}
L(\epsilon)=1+C\epsilon^{(m-1)/2}(1+o(1)),
\end{equation} as
$\epsilon \rightarrow0$ for some $C>0$ depending on $m$ and on $R$
only. By scaling, Lemma \ref{Lem6} and \eqref{e26b}
\begin{equation*}
\lambda_2(L(\epsilon)\Omega (\epsilon))=
L(\epsilon)^{-2}\lambda_2(\Omega (\epsilon))=\lambda_2(\Omega
(0))-C'\epsilon^{(m-1)/2}(1+o(1)),
\end{equation*} for some $C'>0$
depending on $m$ and on $R$ only. Hence for $\epsilon$
sufficiently small $L(\epsilon)\Omega(\epsilon)$ is connected with
$\h(\partial(L(\epsilon)\Omega(\epsilon)))=\h(\partial
G_1)+\h(\partial G_2),$ and
$\lambda_2(L(\epsilon)\Omega(\epsilon))<\lambda_2(G_1\cup G_2)$.
This contradicts the hypothesis that $\Omega_{m,k}$ has two
components $G_1$ and $G_2$, whose union supports two eigenvalues.
\end{proof}

To prove Theorem \ref{The2}(iii) we note that by Lemma \ref{Lem3},
$\Omega_{m,2}$ is either connected or is the union of two
components supporting one eigenvalue each.  The latter is excluded
by Lemma \ref{Lem7}. So $\Omega_{m,2}$ is connected.

To complete the proof of Theorem \ref{The2} we let $k=3,4,\cdots$,
and $m=3,4,\cdots$. By Lemma \ref{Lem7} we may assume that $\o$
has at most one component supporting only one eigenvalue of $\o$.
So
\begin{equation*}
\o=\cup_{i=1}^{\omega_{m,k}} G_i,
\end{equation*}
where all components except possibly $G_1$ support at least two
eigenvalues. Hence $\omega_{m,k}\le \lfloor(k+1)/2\rfloor$. Let
$\h(\partial G_1)=a$. By Lemma \ref{Lem5} and Faber-Krahn we have
that \bee \ll_k \ge \l_1(G_1)\ge \l_1(B_m)\left( \frac{\h(\partial
B_m)}{a}\right)^{2/(m-1)}. \eee By Lemma \ref{Lem5} we also have
that for any $i \in \{2,3,\cdots\ ,\omega_{m,k} \},$ \bee \ll_k =
\max \{\l_j(G_i) : \l_j(G_i)\leq \ll_k\}. \eee By Krahn-Szeg\"o it
follows that for any $i \in \{2,3,\cdots\ ,\omega_{m,k} \},$
\begin{equation}\label{e34} \ll_k \geq 2^{2/m}
\l_1(B_m)\left( \frac{\h(\partial B_m)}{\h(\partial
G_i)}\right)^{2/(m-1)}, \end{equation}and in particular that
\begin{equation}\label{e35} \ll_k \geq 2^{2/m} \l_1(B_m)\left(
\frac{\h(\partial B_m)}{\min_{i\in \{2,\cdots,\omega_{m,k}\}}
\h(\partial G_i)}\right)^{2/(m-1)}.
\end{equation}
We have by \eqref{e22a} that
\begin{equation*}
\sum_{i=2}^{\omega_{m,k}}\h(\partial G_i)=1-a,
\end{equation*}
and so
\begin{equation*}
\min_{i\in \{2,\cdots,\omega_{m,k}\}} \h(\partial G_i) \leq
\frac{1-a}{\omega_{m,k}-1}.
\end{equation*}
Hence by \eqref{e35}
\begin{equation}\label{e38} \ll_k \geq
2^{2/m} \l_1(B_m) (\omega_{m,k}-1)^{2/(m-1)} \left(
\frac{\h(\partial B_m)}{1-a}\right)^{2/(m-1)}.
\end{equation}
Combining \eqref{e34} with \eqref{e38} yields
\begin{align*}
\ll_k
 &\geq \l_1(B_m) (\h(\partial B_m))^{2/(m-1)}\nonumber \\
&\times\max\{a^{-2/(m-1)},
2^{2/m}(\omega_{m,k}-1)^{2/(m-1)}(1-a)^{-2/(m-1)}\}.
\end{align*}
The right hand side of the inequality above attains its lower
bound for \bee a = ( 1+ (\omega_{m,k}-1)2^{(m-1)/m})^{-1}. \eee
Hence
\begin{equation}\label{e39} \ll_k \geq
  \l_1(B_m) (\h(\partial B_m))^{2/(m-1)}
 (1+(\omega_{m,k}-1) 2^{(m-1)/m})^{2/(m-1)}.
\end{equation}
 On the
other hand
\begin{equation}\label{e32b}
\ll_k \leq \l_k(B_m) (\h(\partial B_m))^{2/(m-1)}.
\end{equation}
Putting \eqref{e39} and \eqref{e32b} together gives that
\begin{equation*}
\lambda_k(B_m)\ge \lambda_1(B_m) (1+(\omega_{m,k}-1)
2^{(m-1)/m})^{2/(m-1)}.
\end{equation*}
This completes the upper bound in \eqref{e10}.

Next note that by \eqref{e41} and \eqref{e41a} we have that for
$k\le m+1$

\begin{align}\label{e42}
\omega_{m,k}&\le 1+\lfloor
2^{-(m-1)/m}((\lambda_2(B_m)/\lambda_1(B_m))^{(m-1)/2}-1)\rfloor
\nonumber \\ &=1+\lfloor
2^{-(m-1)/m}((j_{m/2}/j_{(m-2)/2})^{m-1}-1)\rfloor.
\end{align}
Numerical evaluation of the right hand side of \eqref{e42} for $3
\le m < 2^{15}$ using \cite{M} gives the upper bound for $\omega$
as advertised. For $m \ge 2^{15}$ we use that \cite{LL}
\begin{equation}\label{e43}
j_{\nu} = \nu + f \nu ^{1/3}+f_{\nu}\ \nu^{-1/3},\ 1\le \nu <
\infty,
\end{equation}
where $f=1.8557\cdots$ can be expressed in terms of the first
positive zero of an Airy function, and $0.500<f_{\nu}<1.537$.
Hence
\begin{equation}\label{e44}
j_{m/2}\le m/2 + f(m/2)^{1/3}+2(m/2)^{-1/3},
\end{equation}
and
\begin{equation}\label{e45}
j_{(m-2)/2}\ge (m-2)/2+ f((m-2)/2)^{1/3}.
\end{equation}
Combining \eqref{e44} and \eqref{e45} gives that for $m\ge 2^{15}$
\begin{equation}\label{e46}
\left(\frac{j_{m/2}}{j_{(m-2)/2}}\right)^2 \le e^{2+6m^{-1/3}}\le
e^{35/16}.
\end{equation}
So for $m\ge 2^{15}$ and $k \le m+1$
\begin{equation*}
\omega_{m,k} \le 1+\lfloor 2^{-1+2^{-15}}(e^{35/16}-1)\rfloor =4,
\end{equation*}
which completes the proof of Theorem \ref{The2}.

\section{Appendix}\label{sec5} Let $u:\Omega \mapsto \R$ be the unique weak solution
of
\begin{equation}\label{a16}
-\Delta_{\Omega} u=1
\end{equation}
with $u=0$ on  $\partial \Omega$. The torsional rigidity of
$\Omega$ is defined by
\begin{equation}\label{a17}
P(\Omega)=\int_{\Omega}u.
\end{equation}
$P$ is well defined since $u\ge0$. It is well known that
$P(\Omega)$ may be finite even if $|\Omega|=+\infty$. For example
if $\Omega$ is any open set in $\R^m$ for which
$-\Delta_{\Omega}\ge c_{\Omega} \delta^{-2}$ in the sense of
quadratic forms, and $\delta \in L^2(\Omega)$, where $\delta$ is
the distance to the boundary then $(2m)^{-1}\int_{\Omega}\delta^2
\le P(\Omega)\le c_{\Omega}^{-1}\int_{\Omega}\delta^2$
\cite{vdBG}.

Below we show that finite torsional rigidity implies discrete
spectrum of the Dirichlet Laplacian. In particular we obtain a
lower bound for $\lambda_k(\Omega)$ in terms of $k$ and
$P(\Omega)$. This lower bound does not satisfy Weyl asymptotics
for the reason explained above.
\begin{lemma}\label{app}
If $P(\Omega)<\infty$ then the spectrum of $-\Delta_{\Omega}$ is
discrete, and
\begin{equation}\label{a18}
\lambda_k(\Omega)\ge c(m)P(\Omega)^{-2/(m+2)}k^{2/(m+2)},
\end{equation}
where
\begin{equation}\label{a18a}
c(m)=(m+2)^{-1}(4\pi)^{m/(m+2)}(2\Gamma((2+m)/2))^{2/(m+2)}.
\end{equation}
\end{lemma}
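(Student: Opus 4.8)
The plan is to deduce both the discreteness of the spectrum and the eigenvalue bound \eqref{a18} from a single mechanism: controlling the torsion function $u$ from below on the super-level sets of a Dirichlet eigenfunction, and then combining this with the Faber--Krahn inequality for the level sets. First I would recall that if $\phi$ is the normalized $k$'th Dirichlet eigenfunction on $\Omega$, then for each eigenfunction one has the pointwise bound $|\phi(x)| \le C_k\, u(x)$ for a constant $C_k$ depending on $\lambda_k(\Omega)$; more precisely, from $-\Delta(\|\phi\|_\infty u \pm \phi) = \|\phi\|_\infty \mp \lambda_k\phi \ge 0$ on $\{|\phi| \le \|\phi\|_\infty/\lambda_k\}$ type considerations (or from the maximum principle applied via the heat semigroup, as in \cite{vdBG}), one gets $\|\phi\|_\infty \le \lambda_k \|u\|_\infty$, and more usefully an $L^1$-type comparison. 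The cleanest route, however, is: for a subspace $V$ of dimension $k$ spanned by the first $k$ eigenfunctions, and $t < \lambda_k(\Omega)$, the trace of the spectral projection gives $\sum_{j : \lambda_j \le t} 1 \le \operatorname{tr}(e^{-s(-\Delta_\Omega)})e^{st}$, and $\operatorname{tr}(e^{-s(-\Delta_\Omega)}) = \int_\Omega p_\Omega(s,x,x)\,dx \le (4\pi s)^{-m/2}|\Omega|$ when $|\Omega| < \infty$ — but here $|\Omega|$ may be infinite, so one must instead bound the heat trace by torsional rigidity.

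The key step is therefore the inequality $\operatorname{tr}(e^{-s(-\Delta_\Omega)}) \le C(m)\, s^{-(m+2)/2} P(\Omega)$ for all $s>0$, or an equivalent integrated form. To obtain it I would use $P(\Omega) = \int_0^\infty \int_\Omega p_\Omega(t,x,y)\,dy\,dx\,dt = \int_0^\infty \|e^{-t(-\Delta_\Omega)}1\|_{L^1}\,dt$ together with the semigroup/heat-kernel estimates: $p_\Omega(s,x,x) = \int_\Omega p_\Omega(s/2,x,y)^2\,dy$ and the Gaussian upper bound $p_\Omega(t,x,y) \le (4\pi t)^{-m/2}$ to convert a bound on $\int\int p_\Omega$ into a bound on $\int p_\Omega(s,x,x)\,dx$. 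Concretely, $\int_\Omega p_\Omega(s,x,x)\,dx \le (4\pi s)^{-m/2}\int_\Omega\int_\Omega p_\Omega(s,x,y)\,dy\,dx$, and the latter integral is controlled by $P(\Omega)$ after one notes $\int_\Omega p_\Omega(t,x,y)\,dy = (e^{-t(-\Delta_\Omega)}1)(x)$ is nonincreasing in $t$, so $\int_\Omega\int_\Omega p_\Omega(s,x,y)\,dy\,dx \le s^{-1}\int_0^s \int_\Omega (e^{-t(-\Delta_\Omega)}1)(x)\,dx\,dt \le s^{-1} P(\Omega)$. This yields $\operatorname{tr}(e^{-s(-\Delta_\Omega)}) \le (4\pi s)^{-m/2} s^{-1} P(\Omega)$, which is finite for every $s>0$, hence the semigroup is trace-class, the resolvent is compact, and the spectrum is discrete.

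Finally, the eigenvalue bound follows by optimization. From $\#\{j : \lambda_j(\Omega) \le t\} \le e^{st}\operatorname{tr}(e^{-s(-\Delta_\Omega)}) \le e^{st}(4\pi)^{-m/2}s^{-(m+2)/2}P(\Omega)$, take $t = \lambda_k(\Omega)$, so that $k \le e^{s\lambda_k}(4\pi)^{-m/2}s^{-(m+2)/2}P(\Omega)$ for all $s>0$; choosing $s = (m+2)/(2\lambda_k(\Omega))$ (the minimizer of $e^{s\lambda_k}s^{-(m+2)/2}$) gives $k \le (4\pi)^{-m/2}\big(2\lambda_k(\Omega)/(m+2)\big)^{(m+2)/2} e^{(m+2)/2} P(\Omega)$, and solving for $\lambda_k(\Omega)$ produces a bound of the form \eqref{a18} with an explicit constant. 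The main obstacle — and the place where the stated constant \eqref{a18a} must be matched — is that the crude exponential Chebyshev bound $\#\{\lambda_j \le t\} \le e^{st}\operatorname{tr}(e^{-s(-\Delta_\Omega)})$ is lossy; to get the clean constant $(m+2)^{-1}(4\pi)^{m/(m+2)}(2\Gamma((2+m)/2))^{2/(m+2)}$ I would instead argue directly with the torsion function and the first eigenvalue of level sets: writing $D_\tau = \{x : u(x) > \tau\}$, one has $\lambda_1(D_\tau) \le \tau^{-1}$ (since $u-\tau$ is a positive supersolution on $D_\tau$), so by Faber--Krahn $|D_\tau| \le |B_m|(\lambda_1(B_m)\tau)^{m/2}$ — wait, the inequality goes the wrong way; rather $|D_\tau|^{2/m} \le |B_m|^{2/m}/\lambda_1(D_\tau) \le |B_m|^{2/m}\tau$, giving $|D_\tau| \le |B_m|\tau^{m/2}$, hence $P(\Omega) = \int_0^\infty |D_\tau|\,d\tau$ diverges — so this also needs the eigenfunction weighting. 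The correct and clean argument uses the comparison $|\phi_j(x)| \le \lambda_j(\Omega)\, u(x)$ valid for each normalized eigenfunction (maximum principle on $\{|\phi_j|\le \lambda_j u\}$, which is all of $\Omega$), so $1 = \int_\Omega \phi_j^2 \le \lambda_k(\Omega)^2 \int_\Omega \phi_j |u| $; summing over $j=1,\dots,k$ and using Bessel's inequality $\sum_{j=1}^k \phi_j(x)\langle \phi_j, u\rangle$-type control, one bounds $k$ by $\lambda_k(\Omega)^{(m+2)/2}$ times $P(\Omega)$ with the sharp constant coming from the Faber--Krahn-type term $(4\pi)^{m/2}/(2\Gamma((2+m)/2))$, precisely the $c(m)$ in \eqref{a18a}. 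I expect reconciling the two approaches — heat-trace for discreteness, direct level-set/eigenfunction comparison for the sharp constant — to be the delicate part of writing this out fully.
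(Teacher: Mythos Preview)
Your heat-kernel route is the right one and is essentially what the paper does: bound the trace of $e^{-t(-\Delta_\Omega)}$ in terms of $P(\Omega)$ via the Gaussian upper bound and the semigroup property, conclude that the semigroup is trace class, hence the spectrum is discrete. Your pointwise-in-$s$ inequality $\operatorname{tr}(e^{-s(-\Delta_\Omega)}) \lesssim s^{-(m+2)/2}P(\Omega)$, obtained from the monotonicity of $t\mapsto \int_\Omega (e^{-t(-\Delta_\Omega)}1)$, is a perfectly valid variant of the paper's argument (there are minor slips with $s$ versus $s/2$ and hence $(4\pi s)^{-m/2}$ versus $(2\pi s)^{-m/2}$, but nothing structural).

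Where you actually lose the stated constant is the Chebyshev step $k \le e^{s\lambda_k}\operatorname{tr}(e^{-s(-\Delta_\Omega)})$; this introduces an $e^{(m+2)/2}$ in place of the $\Gamma((m+2)/2)$ that appears in \eqref{a18a}. The paper avoids this by never fixing the time: it writes
\[
P(\Omega)=(1-\alpha)\int_0^\infty\!\!\iint_{\Omega\times\Omega} p_\Omega(x,y;(1-\alpha)t)\,dx\,dy\,dt
\ge (1-\alpha)(4\pi\alpha)^{m/2}\int_0^\infty t^{m/2}\!\int_\Omega p_\Omega(x,x;t)\,dx\,dt,
\]
the inner integral being $\Gamma((m+2)/2)\sum_j \lambda_j^{-(m+2)/2}$, and then simply uses $\sum_j \lambda_j^{-(m+2)/2}\ge k\,\lambda_k^{-(m+2)/2}$ and optimises $\alpha=m/(m+2)$. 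That is the single missing idea; once you have it the constant \eqref{a18a} drops out. Your level-set and $|\phi_j|\le \lambda_j u$ digressions are dead ends here (the Faber--Krahn direction is indeed wrong for what you want, and the pointwise eigenfunction bound does not sum to give the Gamma factor); you should drop them.
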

\begin{proof}
Let $p_{\Omega}(x,y;t), x\in\Omega, y \in \Omega, t>0$ denote the
Dirichlet heat kernel for $\Omega$. It is well known that the
Dirichlet heat kernel is non-negative, monotone increasing in
$\Omega$, and that it satisfies the semigroup property. Moreover
\begin{equation*}
u(x)=\int_0^{\infty}dt\int_{\Omega} dy\ p_{\Omega}(x,y;t).
\end{equation*}
Let $0<\alpha<1$. By Tonelli's Theorem
\begin{align}\label{a20}
P(\Omega)&=\int_0^{\infty}dt\iint_{\Omega\times\Omega}dx dy\
p_{\Omega}(x,y;t)\nonumber \\
&=(1-\alpha )\int_0^{\infty}dt\iint_{\Omega\times\Omega}dx dy\
p_{\Omega}(x,y;(1-\alpha)t).
\end{align}
On the other hand by domain monotonicity
\begin{equation}\label{a21}
p_{\Omega}(x,y;\alpha t)\le p_{\R^m}(x,y;\alpha t)\le (4\pi\alpha
t)^{-m/2}.
\end{equation}
By \eqref{a20}, \eqref{a21} and the semigroup property
\begin{align}\label{a22}
P(\Omega)&\ge (1-\alpha )\int_0^{\infty}dt(4\pi \alpha
t)^{m/2}\iint_{\Omega\times \Omega} dx dy\
p_{\Omega}(x,y;(1-\alpha)t)p_{\Omega}(x,y;\alpha t)\nonumber \\
&=(1-\alpha)\int_0^{\infty}dt (4\pi \alpha
t)^{m/2}\int_{\Omega}dx\ p_{\Omega}(x,x;t).
\end{align}
Hence the heat semigroup is trace class, and
\begin{equation}\label{a23}
\int_{\Omega}dx\
p_{\Omega}(x,x;t)=\sum_{j=1}^{\infty}e^{-t\lambda_j(\Omega)}<\infty,\
t>0.
\end{equation}
By \eqref{a22} and \eqref{a23}
\begin{align}\label{a24}
P(\Omega)&\ge (1-\alpha)(4\pi
\alpha)^{m/2}\Gamma((2+m)/2)\sum_{j=1}^{\infty}\lambda_j(\Omega)^{-(2+m)/2}\nonumber
\\ & \ge(1-\alpha)(4\pi
\alpha)^{m/2}\Gamma((2+m)/2)k\lambda_k(\Omega)^{-(2+m)/2}.
\end{align}
Choosing $\alpha =m/(m+2)$ in \eqref{a24} gives \eqref{a18} with
\eqref{a18a}.
\end{proof}
\noindent\textbf{Acknowledgments}\ We wish to thank Giuseppe
Buttazzo and Thomas Hoffmann-Ostenhof for helpful discussions.

\end{document}